\title{A refined bijection between alternating permutations and 0-1-2 increasing trees}
\author{Heesung Shin}
\address[Heesung Shin]{Universit\'{e} de Lyon; Universit\'{e} Lyon 1; Institut Camille Jordan; UMR 5208 du CNRS; 43, boulevard du 11 novembre 1918, F-69622 Villeurbanne Cedex, France}
\email{hshin@math.univ-lyon1.fr}
\date{\today}
\newtheorem{thm}{Theorem}
\theoremstyle{definition}
\newtheorem*{rmk}{Remark}
\newtheorem*{ex}{Example}
\DeclareMathOperator\inv{inv}
\DeclareMathOperator\threeonetwo{31-2}
\newcommand\set[1]{\left\{#1\right\}}
\newcommand\abs[1]{\left|#1\right|}
\def\A{\mathcal{A}}
\def\An{\mathcal{A}_n}
\def\Ank{\mathcal{A}_{n,k}}
\def\T{\mathcal{T}}
\def\Tn{\mathcal{T}_n}
\def\Tnk{\mathcal{T}_{n,k}}
\def\312{\threeonetwo}
\begin{document}
\maketitle

\begin{abstract}
We construct a refined bijection $\varphi$ between alternating permutations and 0-1-2 increasing trees with degree at most $2$. It satisfies that the first element of alternating permutation $\pi$ is equal to the first vertex in $\varphi(\pi)$ in the postorder.
\end{abstract}

\section{Introduction}

Let $\An$ be the set of {\em alternating permutations} $\pi=\pi_1\pi_2\dots\pi_n$ on $[n]:=\set{1,2,\dots,n}$ satisfying
$$\pi_1 > \pi_2 < \pi_3 > \pi_4 < \cdots.$$
Let $\Ank = \set{\pi\in\An : \pi_1=k}$.
Let $\Tn$ be the set of {\em 0-1-2 increasing trees} $T$ on $[n]$ with degree at most 2. Given a increasing tree $T$, the unique path towards a smallest child of each vertex from the root $1$ is called the {\em main chain} of $T$ and we denote the leaf at the end of the main chain by $p(T)$.
Let $\Tnk = \set{T\in\Tn : p(T)=k}$.

It is important that $\An$ and $\Tn$ are equinumerous (see \cite{Foa72,FS74}, and also the bijection in \cite{FS73,Don75}).
Also $\Ank$ and $\Tnk$ are equinumerous~\cite{Pou82, KPP94}.
However, no one has managed to establish explicit bijections between $\Ank$ and $\Tnk$, as mentioned in the paper of
Kuznetsov, Pak and Postnikov \cite{KPP94}, who asked also for a combinatorial explanation of
the identity
$$\abs{\Tnk} = \abs{\T_{n,k-1}} + \abs{\T_{n-1,n-k+1}}$$
in the model of 0-1-2 increasing trees.
The aim of this paper is to give a bijection between $\Ank$ and $\Tnk$. To the best knowledge
of the author, this is the first bijection between these two sets.

\section{A bijection between $\Ank$ and $\Tnk$}
For $n=1$ or $2$, since $\abs{\A_n}=\abs{\T_n}=1$, we can define trivially $\varphi:\Ank\to \Tnk$.
For $n\ge 3$, given $\pi \in \Ank$ ($k=\pi_1$), we define the mapping $\varphi:\Ank \to \Tnk$ recursively as follows:
\begin{enumerate}[(a)]
\item \label{case:a} If $\pi_2=k-1$, then define $\pi'\in \A_{n-2,i-2}$ by deleting $k-1$ and $k$ from $\pi$ and relabeling by $[n-2]$ where $i> k$. We get $T'=\varphi(\pi') \in \T_{n-2,i-2}$.
    Relabel $T'$ by $\set{1,\dots,k-2,k+1,\dots,n}$, denoted by $T''$. Let $m$ be the minimal vertex greater than $k$ in the main chain of $T''$ and $j$ the parent of $m$ in $T''$. Then insert a vertex $k-1$ in the middle of the edge $(j,m)$ and add the edge $(k,k-1)$.
    $$
    \centering
\begin{pgfpicture}{38.00mm}{29.20mm}{122.00mm}{73.20mm}
\pgfsetxvec{\pgfpoint{0.80mm}{0mm}}
\pgfsetyvec{\pgfpoint{0mm}{0.80mm}}
\color[rgb]{0,0,0}\pgfsetlinewidth{0.30mm}\pgfsetdash{}{0mm}
\pgfsetlinewidth{1.20mm}\pgfmoveto{\pgfxy(95.00,65.00)}\pgflineto{\pgfxy(105.00,65.00)}\pgfstroke
\pgfmoveto{\pgfxy(105.00,65.00)}\pgflineto{\pgfxy(102.20,65.70)}\pgflineto{\pgfxy(102.20,64.30)}\pgflineto{\pgfxy(105.00,65.00)}\pgfclosepath\pgffill
\pgfmoveto{\pgfxy(105.00,65.00)}\pgflineto{\pgfxy(102.20,65.70)}\pgflineto{\pgfxy(102.20,64.30)}\pgflineto{\pgfxy(105.00,65.00)}\pgfclosepath\pgfstroke
\pgfputat{\pgfxy(120.00,80.00)}{\pgfbox[bottom,left]{\fontsize{9.10}{10.93}\selectfont \makebox[0pt]{$T$}}}
\pgfsetdash{{0.30mm}{0.50mm}}{0mm}\pgfsetlinewidth{0.30mm}\pgfmoveto{\pgfxy(120.00,50.00)}\pgflineto{\pgfxy(120.00,40.00)}\pgfstroke
\pgfcircle[fill]{\pgfxy(120.00,50.00)}{0.80mm}
\pgfsetdash{}{0mm}\pgfcircle[stroke]{\pgfxy(120.00,50.00)}{0.80mm}
\pgfcircle[fill]{\pgfxy(120.00,40.00)}{0.80mm}
\pgfcircle[stroke]{\pgfxy(120.00,40.00)}{0.80mm}
\pgfmoveto{\pgfxy(120.00,50.00)}\pgflineto{\pgfxy(130.00,60.00)}\pgfstroke
\pgfcircle[fill]{\pgfxy(120.00,70.00)}{0.80mm}
\pgfcircle[stroke]{\pgfxy(120.00,70.00)}{0.80mm}
\pgfmoveto{\pgfxy(120.00,40.00)}\pgflineto{\pgfxy(130.00,50.00)}\pgfstroke
\pgfcircle[fill]{\pgfxy(130.00,50.00)}{0.80mm}
\pgfcircle[stroke]{\pgfxy(130.00,50.00)}{0.80mm}
\pgfputat{\pgfxy(117.00,69.00)}{\pgfbox[bottom,left]{\fontsize{9.10}{10.93}\selectfont \makebox[0pt][r]{$k$}}}
\pgfputat{\pgfxy(117.00,59.00)}{\pgfbox[bottom,left]{\fontsize{9.10}{10.93}\selectfont \makebox[0pt][r]{$k-1$}}}
\pgfcircle[fill]{\pgfxy(120.00,60.00)}{0.80mm}
\pgfcircle[stroke]{\pgfxy(120.00,60.00)}{0.80mm}
\pgfmoveto{\pgfxy(120.00,50.00)}\pgflineto{\pgfxy(120.00,60.00)}\pgfstroke
\pgfmoveto{\pgfxy(130.00,70.00)}\pgflineto{\pgfxy(120.00,60.00)}\pgfstroke
\pgfmoveto{\pgfxy(120.00,70.00)}\pgflineto{\pgfxy(120.00,60.00)}\pgfstroke
\pgfellipse[stroke]{\pgfxy(130.00,79.49)}{\pgfxy(6.00,0.00)}{\pgfxy(0.00,9.51)}
\pgfcircle[fill]{\pgfxy(130.00,70.00)}{0.80mm}
\pgfcircle[stroke]{\pgfxy(130.00,70.00)}{0.80mm}
\pgfputat{\pgfxy(130.00,72.00)}{\pgfbox[bottom,left]{\fontsize{9.10}{10.93}\selectfont \makebox[0pt][r]{$m$}}}
\pgfputat{\pgfxy(118.00,48.00)}{\pgfbox[bottom,left]{\fontsize{9.10}{10.93}\selectfont \makebox[0pt][r]{$j$}}}
\pgfcircle[fill]{\pgfxy(130.00,60.00)}{0.80mm}
\pgfcircle[stroke]{\pgfxy(130.00,60.00)}{0.80mm}
\pgfellipse[stroke]{\pgfxy(133.50,63.49)}{\pgfxy(6.50,0.00)}{\pgfxy(0.00,4.51)}
\pgfellipse[stroke]{\pgfxy(133.50,53.49)}{\pgfxy(6.50,0.00)}{\pgfxy(0.00,4.51)}
\pgfcircle[fill]{\pgfxy(130.00,84.00)}{0.80mm}
\pgfcircle[stroke]{\pgfxy(130.00,84.00)}{0.80mm}
\pgfsetdash{{0.30mm}{0.50mm}}{0mm}\pgfmoveto{\pgfxy(130.00,84.00)}\pgflineto{\pgfxy(130.00,70.00)}\pgfstroke
\pgfputat{\pgfxy(128.00,83.00)}{\pgfbox[bottom,left]{\fontsize{9.10}{10.93}\selectfont \makebox[0pt][r]{i}}}
\pgfputat{\pgfxy(80.00,80.00)}{\pgfbox[bottom,left]{\fontsize{9.10}{10.93}\selectfont \makebox[0pt]{$T''$}}}
\pgfmoveto{\pgfxy(70.00,50.00)}\pgflineto{\pgfxy(70.00,40.00)}\pgfstroke
\pgfcircle[fill]{\pgfxy(70.00,50.00)}{0.80mm}
\pgfsetdash{}{0mm}\pgfcircle[stroke]{\pgfxy(70.00,50.00)}{0.80mm}
\pgfcircle[fill]{\pgfxy(70.00,40.00)}{0.80mm}
\pgfcircle[stroke]{\pgfxy(70.00,40.00)}{0.80mm}
\pgfmoveto{\pgfxy(70.00,50.00)}\pgflineto{\pgfxy(80.00,60.00)}\pgfstroke
\pgfmoveto{\pgfxy(70.00,40.00)}\pgflineto{\pgfxy(80.00,50.00)}\pgfstroke
\pgfcircle[fill]{\pgfxy(80.00,50.00)}{0.80mm}
\pgfcircle[stroke]{\pgfxy(80.00,50.00)}{0.80mm}
\pgfmoveto{\pgfxy(70.00,50.00)}\pgflineto{\pgfxy(70.00,60.00)}\pgfstroke
\pgfellipse[stroke]{\pgfxy(70.00,69.49)}{\pgfxy(6.00,0.00)}{\pgfxy(0.00,9.51)}
\pgfcircle[fill]{\pgfxy(70.00,60.00)}{0.80mm}
\pgfcircle[stroke]{\pgfxy(70.00,60.00)}{0.80mm}
\pgfputat{\pgfxy(70.00,62.00)}{\pgfbox[bottom,left]{\fontsize{9.10}{10.93}\selectfont \makebox[0pt][r]{$m$}}}
\pgfputat{\pgfxy(68.00,48.00)}{\pgfbox[bottom,left]{\fontsize{9.10}{10.93}\selectfont \makebox[0pt][r]{$j$}}}
\pgfcircle[fill]{\pgfxy(80.00,60.00)}{0.80mm}
\pgfcircle[stroke]{\pgfxy(80.00,60.00)}{0.80mm}
\pgfellipse[stroke]{\pgfxy(83.50,63.49)}{\pgfxy(6.50,0.00)}{\pgfxy(0.00,4.51)}
\pgfellipse[stroke]{\pgfxy(83.50,53.49)}{\pgfxy(6.50,0.00)}{\pgfxy(0.00,4.51)}
\pgfcircle[fill]{\pgfxy(70.00,74.00)}{0.80mm}
\pgfcircle[stroke]{\pgfxy(70.00,74.00)}{0.80mm}
\pgfsetdash{{0.30mm}{0.50mm}}{0mm}\pgfmoveto{\pgfxy(70.00,74.00)}\pgflineto{\pgfxy(70.00,60.00)}\pgfstroke
\pgfputat{\pgfxy(68.00,73.00)}{\pgfbox[bottom,left]{\fontsize{9.10}{10.93}\selectfont \makebox[0pt][r]{i}}}
\end{pgfpicture}%
$$
    We get the tree $T = \varphi(\pi) \in \Tnk$.
\item \label{case:b} If $\pi_2<k-1$, then define $\pi'=(k-1~k)\pi \in \A_{n,k-1}$ (exchange $k-1$ and $k$ in $\pi$). We get $T'=\varphi(\pi') \in \T_{n,k-1}$.
    \begin{enumerate}[(1)]
    \item If $k$ is a sibling of $k-1$ in $T'$, then we get the tree $T = \varphi(\pi) \in \Tnk$ modifying as follows: \label{case:b1}
    $$
    \centering
\begin{pgfpicture}{38.00mm}{29.20mm}{122.00mm}{70.00mm}
\pgfsetxvec{\pgfpoint{0.80mm}{0mm}}
\pgfsetyvec{\pgfpoint{0mm}{0.80mm}}
\color[rgb]{0,0,0}\pgfsetlinewidth{0.30mm}\pgfsetdash{}{0mm}
\pgfputat{\pgfxy(130.00,80.00)}{\pgfbox[bottom,left]{\fontsize{9.10}{10.93}\selectfont \makebox[0pt]{$T$}}}
\pgfsetdash{{0.30mm}{0.50mm}}{0mm}\pgfmoveto{\pgfxy(120.00,50.00)}\pgflineto{\pgfxy(120.00,40.00)}\pgfstroke
\pgfcircle[fill]{\pgfxy(120.00,50.00)}{0.80mm}
\pgfsetdash{}{0mm}\pgfcircle[stroke]{\pgfxy(120.00,50.00)}{0.80mm}
\pgfcircle[fill]{\pgfxy(120.00,40.00)}{0.80mm}
\pgfcircle[stroke]{\pgfxy(120.00,40.00)}{0.80mm}
\pgfmoveto{\pgfxy(120.00,50.00)}\pgflineto{\pgfxy(130.00,60.00)}\pgfstroke
\pgfcircle[fill]{\pgfxy(120.00,70.00)}{0.80mm}
\pgfcircle[stroke]{\pgfxy(120.00,70.00)}{0.80mm}
\pgfmoveto{\pgfxy(120.00,40.00)}\pgflineto{\pgfxy(130.00,50.00)}\pgfstroke
\pgfcircle[fill]{\pgfxy(130.00,50.00)}{0.80mm}
\pgfcircle[stroke]{\pgfxy(130.00,50.00)}{0.80mm}
\pgfputat{\pgfxy(117.00,69.00)}{\pgfbox[bottom,left]{\fontsize{9.10}{10.93}\selectfont \makebox[0pt][r]{$k$}}}
\pgfputat{\pgfxy(117.00,59.00)}{\pgfbox[bottom,left]{\fontsize{9.10}{10.93}\selectfont \makebox[0pt][r]{$k-1$}}}
\pgfcircle[fill]{\pgfxy(120.00,60.00)}{0.80mm}
\pgfcircle[stroke]{\pgfxy(120.00,60.00)}{0.80mm}
\pgfmoveto{\pgfxy(120.00,50.00)}\pgflineto{\pgfxy(120.00,60.00)}\pgfstroke
\pgfmoveto{\pgfxy(130.00,70.00)}\pgflineto{\pgfxy(120.00,60.00)}\pgfstroke
\pgfmoveto{\pgfxy(120.00,70.00)}\pgflineto{\pgfxy(120.00,60.00)}\pgfstroke
\pgfellipse[stroke]{\pgfxy(133.50,73.49)}{\pgfxy(6.50,0.00)}{\pgfxy(0.00,4.51)}
\pgfputat{\pgfxy(134.00,72.00)}{\pgfbox[bottom,left]{\fontsize{9.10}{10.93}\selectfont \makebox[0pt]{B}}}
\pgfcircle[fill]{\pgfxy(130.00,70.00)}{0.80mm}
\pgfcircle[stroke]{\pgfxy(130.00,70.00)}{0.80mm}
\pgfputat{\pgfxy(118.00,48.00)}{\pgfbox[bottom,left]{\fontsize{9.10}{10.93}\selectfont \makebox[0pt][r]{$j$}}}
\pgfcircle[fill]{\pgfxy(130.00,60.00)}{0.80mm}
\pgfcircle[stroke]{\pgfxy(130.00,60.00)}{0.80mm}
\pgfputat{\pgfxy(134.00,62.00)}{\pgfbox[bottom,left]{\fontsize{9.10}{10.93}\selectfont \makebox[0pt]{A}}}
\pgfellipse[stroke]{\pgfxy(133.50,63.49)}{\pgfxy(6.50,0.00)}{\pgfxy(0.00,4.51)}
\pgfellipse[stroke]{\pgfxy(133.50,53.49)}{\pgfxy(6.50,0.00)}{\pgfxy(0.00,4.51)}
\pgfsetlinewidth{1.20mm}\pgfmoveto{\pgfxy(95.00,65.00)}\pgflineto{\pgfxy(105.00,65.00)}\pgfstroke
\pgfmoveto{\pgfxy(105.00,65.00)}\pgflineto{\pgfxy(102.20,65.70)}\pgflineto{\pgfxy(102.20,64.30)}\pgflineto{\pgfxy(105.00,65.00)}\pgfclosepath\pgffill
\pgfmoveto{\pgfxy(105.00,65.00)}\pgflineto{\pgfxy(102.20,65.70)}\pgflineto{\pgfxy(102.20,64.30)}\pgflineto{\pgfxy(105.00,65.00)}\pgfclosepath\pgfstroke
\pgfputat{\pgfxy(80.00,80.00)}{\pgfbox[bottom,left]{\fontsize{9.10}{10.93}\selectfont \makebox[0pt]{$T'$}}}
\pgfputat{\pgfxy(57.00,59.00)}{\pgfbox[bottom,left]{\fontsize{9.10}{10.93}\selectfont \makebox[0pt][r]{$k-1$}}}
\pgfsetdash{{0.30mm}{0.50mm}}{0mm}\pgfsetlinewidth{0.30mm}\pgfmoveto{\pgfxy(60.00,50.00)}\pgflineto{\pgfxy(60.00,40.00)}\pgfstroke
\pgfcircle[fill]{\pgfxy(70.00,60.00)}{0.80mm}
\pgfsetdash{}{0mm}\pgfcircle[stroke]{\pgfxy(70.00,60.00)}{0.80mm}
\pgfcircle[fill]{\pgfxy(60.00,50.00)}{0.80mm}
\pgfcircle[stroke]{\pgfxy(60.00,50.00)}{0.80mm}
\pgfcircle[fill]{\pgfxy(60.00,40.00)}{0.80mm}
\pgfcircle[stroke]{\pgfxy(60.00,40.00)}{0.80mm}
\pgfmoveto{\pgfxy(70.00,60.00)}\pgflineto{\pgfxy(80.00,70.00)}\pgfstroke
\pgfmoveto{\pgfxy(60.00,40.00)}\pgflineto{\pgfxy(70.00,50.00)}\pgfstroke
\pgfcircle[fill]{\pgfxy(70.00,50.00)}{0.80mm}
\pgfcircle[stroke]{\pgfxy(70.00,50.00)}{0.80mm}
\pgfputat{\pgfxy(73.00,59.00)}{\pgfbox[bottom,left]{\fontsize{9.10}{10.93}\selectfont $k$}}
\pgfcircle[fill]{\pgfxy(60.00,60.00)}{0.80mm}
\pgfcircle[stroke]{\pgfxy(60.00,60.00)}{0.80mm}
\pgfmoveto{\pgfxy(60.00,50.00)}\pgflineto{\pgfxy(60.00,60.00)}\pgfstroke
\pgfputat{\pgfxy(57.00,49.00)}{\pgfbox[bottom,left]{\fontsize{9.10}{10.93}\selectfont \makebox[0pt][r]{$j$}}}
\pgfmoveto{\pgfxy(70.00,60.00)}\pgflineto{\pgfxy(60.00,50.00)}\pgfstroke
\pgfmoveto{\pgfxy(70.00,70.00)}\pgflineto{\pgfxy(70.00,60.00)}\pgfstroke
\pgfellipse[stroke]{\pgfxy(70.00,77.49)}{\pgfxy(5.00,0.00)}{\pgfxy(0.00,7.51)}
\pgfputat{\pgfxy(70.00,77.00)}{\pgfbox[bottom,left]{\fontsize{9.10}{10.93}\selectfont \makebox[0pt]{$A$}}}
\pgfcircle[fill]{\pgfxy(70.00,70.00)}{0.80mm}
\pgfcircle[stroke]{\pgfxy(70.00,70.00)}{0.80mm}
\pgfcircle[fill]{\pgfxy(80.00,70.00)}{0.80mm}
\pgfcircle[stroke]{\pgfxy(80.00,70.00)}{0.80mm}
\pgfputat{\pgfxy(84.00,72.00)}{\pgfbox[bottom,left]{\fontsize{9.10}{10.93}\selectfont \makebox[0pt]{$B$}}}
\pgfellipse[stroke]{\pgfxy(83.50,73.49)}{\pgfxy(6.50,0.00)}{\pgfxy(0.00,4.51)}
\pgfellipse[stroke]{\pgfxy(73.50,53.49)}{\pgfxy(6.50,0.00)}{\pgfxy(0.00,4.51)}
\end{pgfpicture}%
$$
    \item If $k$ is a not sibling of $k-1$ in $T'$, then we get the tree $T = \varphi(\pi) \in \Tnk$ exchanging the labels $k-1$ and $k$ in $T'$. \label{case:b2}
    $$
    \centering
\begin{pgfpicture}{38.00mm}{29.20mm}{122.00mm}{70.00mm}
\pgfsetxvec{\pgfpoint{0.80mm}{0mm}}
\pgfsetyvec{\pgfpoint{0mm}{0.80mm}}
\color[rgb]{0,0,0}\pgfsetlinewidth{0.30mm}\pgfsetdash{}{0mm}
\pgfputat{\pgfxy(140.00,80.00)}{\pgfbox[bottom,left]{\fontsize{9.10}{10.93}\selectfont \makebox[0pt]{$T$}}}
\pgfsetlinewidth{1.20mm}\pgfmoveto{\pgfxy(95.00,65.00)}\pgflineto{\pgfxy(105.00,65.00)}\pgfstroke
\pgfmoveto{\pgfxy(105.00,65.00)}\pgflineto{\pgfxy(102.20,65.70)}\pgflineto{\pgfxy(102.20,64.30)}\pgflineto{\pgfxy(105.00,65.00)}\pgfclosepath\pgffill
\pgfmoveto{\pgfxy(105.00,65.00)}\pgflineto{\pgfxy(102.20,65.70)}\pgflineto{\pgfxy(102.20,64.30)}\pgflineto{\pgfxy(105.00,65.00)}\pgfclosepath\pgfstroke
\pgfputat{\pgfxy(80.00,80.00)}{\pgfbox[bottom,left]{\fontsize{9.10}{10.93}\selectfont \makebox[0pt]{$T'$}}}
\pgfputat{\pgfxy(57.00,59.00)}{\pgfbox[bottom,left]{\fontsize{9.10}{10.93}\selectfont \makebox[0pt][r]{$k-1$}}}
\pgfsetdash{{0.30mm}{0.50mm}}{0mm}\pgfsetlinewidth{0.30mm}\pgfmoveto{\pgfxy(60.00,50.00)}\pgflineto{\pgfxy(60.00,40.00)}\pgfstroke
\pgfcircle[fill]{\pgfxy(70.00,60.00)}{0.80mm}
\pgfsetdash{}{0mm}\pgfcircle[stroke]{\pgfxy(70.00,60.00)}{0.80mm}
\pgfcircle[fill]{\pgfxy(60.00,50.00)}{0.80mm}
\pgfcircle[stroke]{\pgfxy(60.00,50.00)}{0.80mm}
\pgfcircle[fill]{\pgfxy(60.00,40.00)}{0.80mm}
\pgfcircle[stroke]{\pgfxy(60.00,40.00)}{0.80mm}
\pgfmoveto{\pgfxy(70.00,60.00)}\pgflineto{\pgfxy(80.00,70.00)}\pgfstroke
\pgfmoveto{\pgfxy(60.00,40.00)}\pgflineto{\pgfxy(70.00,50.00)}\pgfstroke
\pgfcircle[fill]{\pgfxy(70.00,50.00)}{0.80mm}
\pgfcircle[stroke]{\pgfxy(70.00,50.00)}{0.80mm}
\pgfputat{\pgfxy(72.00,52.00)}{\pgfbox[bottom,left]{\fontsize{9.10}{10.93}\selectfont $k$}}
\pgfcircle[fill]{\pgfxy(60.00,60.00)}{0.80mm}
\pgfcircle[stroke]{\pgfxy(60.00,60.00)}{0.80mm}
\pgfmoveto{\pgfxy(60.00,50.00)}\pgflineto{\pgfxy(60.00,60.00)}\pgfstroke
\pgfputat{\pgfxy(57.00,49.00)}{\pgfbox[bottom,left]{\fontsize{9.10}{10.93}\selectfont \makebox[0pt][r]{$j$}}}
\pgfmoveto{\pgfxy(70.00,60.00)}\pgflineto{\pgfxy(60.00,50.00)}\pgfstroke
\pgfmoveto{\pgfxy(70.00,70.00)}\pgflineto{\pgfxy(70.00,60.00)}\pgfstroke
\pgfellipse[stroke]{\pgfxy(70.00,77.49)}{\pgfxy(5.00,0.00)}{\pgfxy(0.00,7.51)}
\pgfputat{\pgfxy(70.00,77.00)}{\pgfbox[bottom,left]{\fontsize{9.10}{10.93}\selectfont \makebox[0pt]{$A$}}}
\pgfcircle[fill]{\pgfxy(70.00,70.00)}{0.80mm}
\pgfcircle[stroke]{\pgfxy(70.00,70.00)}{0.80mm}
\pgfcircle[fill]{\pgfxy(80.00,70.00)}{0.80mm}
\pgfcircle[stroke]{\pgfxy(80.00,70.00)}{0.80mm}
\pgfputat{\pgfxy(84.00,72.00)}{\pgfbox[bottom,left]{\fontsize{9.10}{10.93}\selectfont \makebox[0pt]{$B$}}}
\pgfellipse[stroke]{\pgfxy(83.50,73.49)}{\pgfxy(6.50,0.00)}{\pgfxy(0.00,4.51)}
\pgfellipse[stroke]{\pgfxy(73.50,53.49)}{\pgfxy(6.50,0.00)}{\pgfxy(0.00,4.51)}
\pgfputat{\pgfxy(117.00,59.00)}{\pgfbox[bottom,left]{\fontsize{9.10}{10.93}\selectfont \makebox[0pt][r]{$k$}}}
\pgfsetdash{{0.30mm}{0.50mm}}{0mm}\pgfmoveto{\pgfxy(120.00,50.00)}\pgflineto{\pgfxy(120.00,40.00)}\pgfstroke
\pgfcircle[fill]{\pgfxy(130.00,60.00)}{0.80mm}
\pgfsetdash{}{0mm}\pgfcircle[stroke]{\pgfxy(130.00,60.00)}{0.80mm}
\pgfcircle[fill]{\pgfxy(120.00,50.00)}{0.80mm}
\pgfcircle[stroke]{\pgfxy(120.00,50.00)}{0.80mm}
\pgfcircle[fill]{\pgfxy(120.00,40.00)}{0.80mm}
\pgfcircle[stroke]{\pgfxy(120.00,40.00)}{0.80mm}
\pgfmoveto{\pgfxy(130.00,60.00)}\pgflineto{\pgfxy(140.00,70.00)}\pgfstroke
\pgfmoveto{\pgfxy(120.00,40.00)}\pgflineto{\pgfxy(130.00,50.00)}\pgfstroke
\pgfcircle[fill]{\pgfxy(130.00,50.00)}{0.80mm}
\pgfcircle[stroke]{\pgfxy(130.00,50.00)}{0.80mm}
\pgfputat{\pgfxy(134.00,52.00)}{\pgfbox[bottom,left]{\fontsize{9.10}{10.93}\selectfont \makebox[0pt]{$k-1$}}}
\pgfcircle[fill]{\pgfxy(120.00,60.00)}{0.80mm}
\pgfcircle[stroke]{\pgfxy(120.00,60.00)}{0.80mm}
\pgfmoveto{\pgfxy(120.00,50.00)}\pgflineto{\pgfxy(120.00,60.00)}\pgfstroke
\pgfputat{\pgfxy(117.00,49.00)}{\pgfbox[bottom,left]{\fontsize{9.10}{10.93}\selectfont \makebox[0pt][r]{$j$}}}
\pgfmoveto{\pgfxy(130.00,60.00)}\pgflineto{\pgfxy(120.00,50.00)}\pgfstroke
\pgfmoveto{\pgfxy(130.00,70.00)}\pgflineto{\pgfxy(130.00,60.00)}\pgfstroke
\pgfellipse[stroke]{\pgfxy(130.00,77.49)}{\pgfxy(5.00,0.00)}{\pgfxy(0.00,7.51)}
\pgfcircle[fill]{\pgfxy(130.00,70.00)}{0.80mm}
\pgfcircle[stroke]{\pgfxy(130.00,70.00)}{0.80mm}
\pgfcircle[fill]{\pgfxy(140.00,70.00)}{0.80mm}
\pgfcircle[stroke]{\pgfxy(140.00,70.00)}{0.80mm}
\pgfellipse[stroke]{\pgfxy(143.50,73.49)}{\pgfxy(6.50,0.00)}{\pgfxy(0.00,4.51)}
\pgfellipse[stroke]{\pgfxy(133.50,53.49)}{\pgfxy(6.50,0.00)}{\pgfxy(0.00,4.51)}
\pgfputat{\pgfxy(130.00,77.00)}{\pgfbox[bottom,left]{\fontsize{9.10}{10.93}\selectfont \makebox[0pt]{$A$}}}
\pgfputat{\pgfxy(144.00,72.00)}{\pgfbox[bottom,left]{\fontsize{9.10}{10.93}\selectfont \makebox[0pt]{$B$}}}
\end{pgfpicture}%
$$
    \end{enumerate}
\end{enumerate}

\begin{figure}[t]
\begin{tabular}{c||c|c|c|c|c|c}
$\pi$ & $1$ & $213$ & $312$ & $21534$ & $31524$ & $41523$
\\
\hline
$\pi_1$ & 1 & 2 & 3 & 2 & 3 & 4
\\
\hline
$\inv(\pi)$ & 0 & 1 & 2 & 3 & 4 & 5
\\
\hline
$\312(\pi)$ & 0 & 0 & 1 & 1 & 2 & 3
\\
\hline
$\varphi(\pi)$ &
$
\centering
\begin{pgfpicture}{1.50mm}{3.70mm}{9.70mm}{10.50mm}
\pgfsetxvec{\pgfpoint{0.70mm}{0mm}}
\pgfsetyvec{\pgfpoint{0mm}{0.70mm}}
\color[rgb]{0,0,0}\pgfsetlinewidth{0.30mm}\pgfsetdash{}{0mm}
\pgfcircle[fill]{\pgfxy(10.00,10.00)}{0.70mm}
\pgfcircle[stroke]{\pgfxy(10.00,10.00)}{0.70mm}
\pgfputat{\pgfxy(7.00,9.00)}{\pgfbox[bottom,left]{\fontsize{7.97}{9.56}\selectfont \makebox[0pt][r]{$1$}}}
\end{pgfpicture}%
$ &
$
\centering
\begin{pgfpicture}{-2.00mm}{3.70mm}{12.50mm}{14.00mm}
\pgfsetxvec{\pgfpoint{0.70mm}{0mm}}
\pgfsetyvec{\pgfpoint{0mm}{0.70mm}}
\color[rgb]{0,0,0}\pgfsetlinewidth{0.30mm}\pgfsetdash{}{0mm}
\pgfcircle[fill]{\pgfxy(5.00,10.00)}{0.70mm}
\pgfcircle[stroke]{\pgfxy(5.00,10.00)}{0.70mm}
\pgfputat{\pgfxy(2.00,9.00)}{\pgfbox[bottom,left]{\fontsize{7.97}{9.56}\selectfont \makebox[0pt][r]{$1$}}}
\pgfmoveto{\pgfxy(5.00,15.00)}\pgflineto{\pgfxy(5.00,10.00)}\pgfstroke
\pgfcircle[fill]{\pgfxy(5.00,15.00)}{0.70mm}
\pgfcircle[stroke]{\pgfxy(5.00,15.00)}{0.70mm}
\pgfcircle[fill]{\pgfxy(10.00,15.00)}{0.70mm}
\pgfcircle[stroke]{\pgfxy(10.00,15.00)}{0.70mm}
\pgfmoveto{\pgfxy(10.00,15.00)}\pgflineto{\pgfxy(5.00,10.00)}\pgfstroke
\pgfputat{\pgfxy(13.00,14.00)}{\pgfbox[bottom,left]{\fontsize{7.97}{9.56}\selectfont $3$}}
\pgfputat{\pgfxy(2.00,14.00)}{\pgfbox[bottom,left]{\fontsize{7.97}{9.56}\selectfont \makebox[0pt][r]{$2$}}}
\end{pgfpicture}%
$ &
$
\centering
\begin{pgfpicture}{4.30mm}{3.70mm}{12.50mm}{17.50mm}
\pgfsetxvec{\pgfpoint{0.70mm}{0mm}}
\pgfsetyvec{\pgfpoint{0mm}{0.70mm}}
\color[rgb]{0,0,0}\pgfsetlinewidth{0.30mm}\pgfsetdash{}{0mm}
\pgfcircle[fill]{\pgfxy(10.00,10.00)}{0.70mm}
\pgfcircle[stroke]{\pgfxy(10.00,10.00)}{0.70mm}
\pgfputat{\pgfxy(13.00,9.00)}{\pgfbox[bottom,left]{\fontsize{7.97}{9.56}\selectfont $1$}}
\pgfmoveto{\pgfxy(10.00,20.00)}\pgflineto{\pgfxy(10.00,15.00)}\pgfstroke
\pgfcircle[fill]{\pgfxy(10.00,20.00)}{0.70mm}
\pgfcircle[stroke]{\pgfxy(10.00,20.00)}{0.70mm}
\pgfcircle[fill]{\pgfxy(10.00,15.00)}{0.70mm}
\pgfcircle[stroke]{\pgfxy(10.00,15.00)}{0.70mm}
\pgfmoveto{\pgfxy(10.00,15.00)}\pgflineto{\pgfxy(10.00,10.00)}\pgfstroke
\pgfputat{\pgfxy(13.00,14.00)}{\pgfbox[bottom,left]{\fontsize{7.97}{9.56}\selectfont $2$}}
\pgfputat{\pgfxy(13.00,19.00)}{\pgfbox[bottom,left]{\fontsize{7.97}{9.56}\selectfont $3$}}
\end{pgfpicture}%
$ &
$
\centering
\begin{pgfpicture}{-2.00mm}{0.20mm}{12.50mm}{17.50mm}
\pgfsetxvec{\pgfpoint{0.70mm}{0mm}}
\pgfsetyvec{\pgfpoint{0mm}{0.70mm}}
\color[rgb]{0,0,0}\pgfsetlinewidth{0.30mm}\pgfsetdash{}{0mm}
\pgfcircle[fill]{\pgfxy(10.00,10.00)}{0.70mm}
\pgfcircle[stroke]{\pgfxy(10.00,10.00)}{0.70mm}
\pgfputat{\pgfxy(2.00,4.00)}{\pgfbox[bottom,left]{\fontsize{7.97}{9.56}\selectfont \makebox[0pt][r]{$1$}}}
\pgfmoveto{\pgfxy(10.00,20.00)}\pgflineto{\pgfxy(10.00,15.00)}\pgfstroke
\pgfcircle[fill]{\pgfxy(10.00,20.00)}{0.70mm}
\pgfcircle[stroke]{\pgfxy(10.00,20.00)}{0.70mm}
\pgfcircle[fill]{\pgfxy(10.00,15.00)}{0.70mm}
\pgfcircle[stroke]{\pgfxy(10.00,15.00)}{0.70mm}
\pgfmoveto{\pgfxy(10.00,15.00)}\pgflineto{\pgfxy(10.00,10.00)}\pgfstroke
\pgfputat{\pgfxy(13.00,19.00)}{\pgfbox[bottom,left]{\fontsize{7.97}{9.56}\selectfont $5$}}
\pgfputat{\pgfxy(13.00,9.00)}{\pgfbox[bottom,left]{\fontsize{7.97}{9.56}\selectfont $3$}}
\pgfcircle[fill]{\pgfxy(5.00,5.00)}{0.70mm}
\pgfcircle[stroke]{\pgfxy(5.00,5.00)}{0.70mm}
\pgfcircle[fill]{\pgfxy(5.00,10.00)}{0.70mm}
\pgfcircle[stroke]{\pgfxy(5.00,10.00)}{0.70mm}
\pgfmoveto{\pgfxy(5.00,5.00)}\pgflineto{\pgfxy(5.00,10.00)}\pgfstroke
\pgfmoveto{\pgfxy(5.00,5.00)}\pgflineto{\pgfxy(10.00,10.00)}\pgfstroke
\pgfputat{\pgfxy(2.00,9.00)}{\pgfbox[bottom,left]{\fontsize{7.97}{9.56}\selectfont \makebox[0pt][r]{$2$}}}
\pgfputat{\pgfxy(13.00,14.00)}{\pgfbox[bottom,left]{\fontsize{7.97}{9.56}\selectfont $4$}}
\end{pgfpicture}%
$ &
$
\centering
\begin{pgfpicture}{-2.00mm}{0.20mm}{12.50mm}{14.00mm}
\pgfsetxvec{\pgfpoint{0.70mm}{0mm}}
\pgfsetyvec{\pgfpoint{0mm}{0.70mm}}
\color[rgb]{0,0,0}\pgfsetlinewidth{0.30mm}\pgfsetdash{}{0mm}
\pgfcircle[fill]{\pgfxy(5.00,15.00)}{0.70mm}
\pgfcircle[stroke]{\pgfxy(5.00,15.00)}{0.70mm}
\pgfputat{\pgfxy(2.00,4.00)}{\pgfbox[bottom,left]{\fontsize{7.97}{9.56}\selectfont \makebox[0pt][r]{$1$}}}
\pgfmoveto{\pgfxy(10.00,15.00)}\pgflineto{\pgfxy(10.00,10.00)}\pgfstroke
\pgfcircle[fill]{\pgfxy(10.00,15.00)}{0.70mm}
\pgfcircle[stroke]{\pgfxy(10.00,15.00)}{0.70mm}
\pgfcircle[fill]{\pgfxy(10.00,10.00)}{0.70mm}
\pgfcircle[stroke]{\pgfxy(10.00,10.00)}{0.70mm}
\pgfmoveto{\pgfxy(5.00,15.00)}\pgflineto{\pgfxy(5.00,9.50)}\pgfstroke
\pgfputat{\pgfxy(13.00,14.00)}{\pgfbox[bottom,left]{\fontsize{7.97}{9.56}\selectfont $5$}}
\pgfputat{\pgfxy(2.00,14.00)}{\pgfbox[bottom,left]{\fontsize{7.97}{9.56}\selectfont \makebox[0pt][r]{$3$}}}
\pgfcircle[fill]{\pgfxy(5.00,5.00)}{0.70mm}
\pgfcircle[stroke]{\pgfxy(5.00,5.00)}{0.70mm}
\pgfcircle[fill]{\pgfxy(5.00,10.00)}{0.70mm}
\pgfcircle[stroke]{\pgfxy(5.00,10.00)}{0.70mm}
\pgfmoveto{\pgfxy(5.00,5.00)}\pgflineto{\pgfxy(5.00,10.00)}\pgfstroke
\pgfmoveto{\pgfxy(5.00,5.00)}\pgflineto{\pgfxy(10.00,10.00)}\pgfstroke
\pgfputat{\pgfxy(2.00,9.00)}{\pgfbox[bottom,left]{\fontsize{7.97}{9.56}\selectfont \makebox[0pt][r]{$2$}}}
\pgfputat{\pgfxy(13.00,9.00)}{\pgfbox[bottom,left]{\fontsize{7.97}{9.56}\selectfont $4$}}
\end{pgfpicture}%
$ &
$
\centering
\begin{pgfpicture}{-2.00mm}{0.20mm}{12.50mm}{14.00mm}
\pgfsetxvec{\pgfpoint{0.70mm}{0mm}}
\pgfsetyvec{\pgfpoint{0mm}{0.70mm}}
\color[rgb]{0,0,0}\pgfsetlinewidth{0.30mm}\pgfsetdash{}{0mm}
\pgfcircle[fill]{\pgfxy(5.00,15.00)}{0.70mm}
\pgfcircle[stroke]{\pgfxy(5.00,15.00)}{0.70mm}
\pgfputat{\pgfxy(2.00,4.00)}{\pgfbox[bottom,left]{\fontsize{7.97}{9.56}\selectfont \makebox[0pt][r]{$1$}}}
\pgfmoveto{\pgfxy(10.00,15.00)}\pgflineto{\pgfxy(10.00,10.00)}\pgfstroke
\pgfcircle[fill]{\pgfxy(10.00,15.00)}{0.70mm}
\pgfcircle[stroke]{\pgfxy(10.00,15.00)}{0.70mm}
\pgfcircle[fill]{\pgfxy(10.00,10.00)}{0.70mm}
\pgfcircle[stroke]{\pgfxy(10.00,10.00)}{0.70mm}
\pgfmoveto{\pgfxy(5.00,15.00)}\pgflineto{\pgfxy(5.00,9.50)}\pgfstroke
\pgfputat{\pgfxy(13.00,14.00)}{\pgfbox[bottom,left]{\fontsize{7.97}{9.56}\selectfont $5$}}
\pgfputat{\pgfxy(2.00,14.00)}{\pgfbox[bottom,left]{\fontsize{7.97}{9.56}\selectfont \makebox[0pt][r]{$4$}}}
\pgfcircle[fill]{\pgfxy(5.00,5.00)}{0.70mm}
\pgfcircle[stroke]{\pgfxy(5.00,5.00)}{0.70mm}
\pgfcircle[fill]{\pgfxy(5.00,10.00)}{0.70mm}
\pgfcircle[stroke]{\pgfxy(5.00,10.00)}{0.70mm}
\pgfmoveto{\pgfxy(5.00,5.00)}\pgflineto{\pgfxy(5.00,10.00)}\pgfstroke
\pgfmoveto{\pgfxy(5.00,5.00)}\pgflineto{\pgfxy(10.00,10.00)}\pgfstroke
\pgfputat{\pgfxy(2.00,9.00)}{\pgfbox[bottom,left]{\fontsize{7.97}{9.56}\selectfont \makebox[0pt][r]{$2$}}}
\pgfputat{\pgfxy(13.00,9.00)}{\pgfbox[bottom,left]{\fontsize{7.97}{9.56}\selectfont $3$}}
\end{pgfpicture}%
$
\\
\hline
$p(\varphi(\pi))$ & 1 & 2 & 3 & 2 & 3 & 4
\\
\hline
\hline
$\pi$ & $51423$ & $5471623$ & $6471523$ & $548691723$ & $648591723$ & 748591623
\\
\hline
$\pi_1$ & 5 & 5 & 6 & 5 & 6 & 7
\\
\hline
$\inv(\pi)$ & 6 & 13 & 14 & 21 & 22 & 23
\\
\hline
$\312(\pi)$ & 4 & 4 & 5 & 5 & 6 & 7
\\
\hline
$\varphi(\pi)$ &
$
\centering
\begin{pgfpicture}{-2.00mm}{0.20mm}{12.50mm}{14.00mm}
\pgfsetxvec{\pgfpoint{0.70mm}{0mm}}
\pgfsetyvec{\pgfpoint{0mm}{0.70mm}}
\color[rgb]{0,0,0}\pgfsetlinewidth{0.30mm}\pgfsetdash{}{0mm}
\pgfcircle[fill]{\pgfxy(5.00,15.00)}{0.70mm}
\pgfcircle[stroke]{\pgfxy(5.00,15.00)}{0.70mm}
\pgfputat{\pgfxy(2.00,4.00)}{\pgfbox[bottom,left]{\fontsize{7.97}{9.56}\selectfont \makebox[0pt][r]{$1$}}}
\pgfmoveto{\pgfxy(10.00,15.00)}\pgflineto{\pgfxy(10.00,10.00)}\pgfstroke
\pgfcircle[fill]{\pgfxy(10.00,15.00)}{0.70mm}
\pgfcircle[stroke]{\pgfxy(10.00,15.00)}{0.70mm}
\pgfcircle[fill]{\pgfxy(10.00,10.00)}{0.70mm}
\pgfcircle[stroke]{\pgfxy(10.00,10.00)}{0.70mm}
\pgfmoveto{\pgfxy(5.00,15.00)}\pgflineto{\pgfxy(5.00,9.50)}\pgfstroke
\pgfputat{\pgfxy(13.00,14.00)}{\pgfbox[bottom,left]{\fontsize{7.97}{9.56}\selectfont $4$}}
\pgfputat{\pgfxy(2.00,14.00)}{\pgfbox[bottom,left]{\fontsize{7.97}{9.56}\selectfont \makebox[0pt][r]{$5$}}}
\pgfcircle[fill]{\pgfxy(5.00,5.00)}{0.70mm}
\pgfcircle[stroke]{\pgfxy(5.00,5.00)}{0.70mm}
\pgfcircle[fill]{\pgfxy(5.00,10.00)}{0.70mm}
\pgfcircle[stroke]{\pgfxy(5.00,10.00)}{0.70mm}
\pgfmoveto{\pgfxy(5.00,5.00)}\pgflineto{\pgfxy(5.00,10.00)}\pgfstroke
\pgfmoveto{\pgfxy(5.00,5.00)}\pgflineto{\pgfxy(10.00,10.00)}\pgfstroke
\pgfputat{\pgfxy(2.00,9.00)}{\pgfbox[bottom,left]{\fontsize{7.97}{9.56}\selectfont \makebox[0pt][r]{$2$}}}
\pgfputat{\pgfxy(13.00,9.00)}{\pgfbox[bottom,left]{\fontsize{7.97}{9.56}\selectfont $3$}}
\end{pgfpicture}%
$ &
$
\centering
\begin{pgfpicture}{-2.00mm}{0.20mm}{12.50mm}{17.50mm}
\pgfsetxvec{\pgfpoint{0.70mm}{0mm}}
\pgfsetyvec{\pgfpoint{0mm}{0.70mm}}
\color[rgb]{0,0,0}\pgfsetlinewidth{0.30mm}\pgfsetdash{}{0mm}
\pgfcircle[fill]{\pgfxy(5.00,15.00)}{0.70mm}
\pgfcircle[stroke]{\pgfxy(5.00,15.00)}{0.70mm}
\pgfputat{\pgfxy(2.00,4.00)}{\pgfbox[bottom,left]{\fontsize{7.97}{9.56}\selectfont \makebox[0pt][r]{$1$}}}
\pgfmoveto{\pgfxy(10.00,15.00)}\pgflineto{\pgfxy(10.00,10.00)}\pgfstroke
\pgfcircle[fill]{\pgfxy(10.00,15.00)}{0.70mm}
\pgfcircle[stroke]{\pgfxy(10.00,15.00)}{0.70mm}
\pgfcircle[fill]{\pgfxy(10.00,10.00)}{0.70mm}
\pgfcircle[stroke]{\pgfxy(10.00,10.00)}{0.70mm}
\pgfmoveto{\pgfxy(5.00,15.00)}\pgflineto{\pgfxy(5.00,9.50)}\pgfstroke
\pgfputat{\pgfxy(2.00,14.00)}{\pgfbox[bottom,left]{\fontsize{7.97}{9.56}\selectfont \makebox[0pt][r]{$4$}}}
\pgfputat{\pgfxy(2.00,19.00)}{\pgfbox[bottom,left]{\fontsize{7.97}{9.56}\selectfont \makebox[0pt][r]{$5$}}}
\pgfcircle[fill]{\pgfxy(5.00,5.00)}{0.70mm}
\pgfcircle[stroke]{\pgfxy(5.00,5.00)}{0.70mm}
\pgfcircle[fill]{\pgfxy(5.00,10.00)}{0.70mm}
\pgfcircle[stroke]{\pgfxy(5.00,10.00)}{0.70mm}
\pgfmoveto{\pgfxy(5.00,5.00)}\pgflineto{\pgfxy(5.00,10.00)}\pgfstroke
\pgfmoveto{\pgfxy(5.00,5.00)}\pgflineto{\pgfxy(10.00,10.00)}\pgfstroke
\pgfputat{\pgfxy(2.00,9.00)}{\pgfbox[bottom,left]{\fontsize{7.97}{9.56}\selectfont \makebox[0pt][r]{$2$}}}
\pgfputat{\pgfxy(13.00,9.00)}{\pgfbox[bottom,left]{\fontsize{7.97}{9.56}\selectfont $3$}}
\pgfcircle[fill]{\pgfxy(10.00,20.00)}{0.70mm}
\pgfcircle[stroke]{\pgfxy(10.00,20.00)}{0.70mm}
\pgfmoveto{\pgfxy(5.00,20.00)}\pgflineto{\pgfxy(5.00,15.00)}\pgfstroke
\pgfmoveto{\pgfxy(5.00,15.00)}\pgflineto{\pgfxy(10.00,20.00)}\pgfstroke
\pgfcircle[fill]{\pgfxy(5.00,20.00)}{0.70mm}
\pgfcircle[stroke]{\pgfxy(5.00,20.00)}{0.70mm}
\pgfputat{\pgfxy(13.00,14.00)}{\pgfbox[bottom,left]{\fontsize{7.97}{9.56}\selectfont $6$}}
\pgfputat{\pgfxy(13.00,19.00)}{\pgfbox[bottom,left]{\fontsize{7.97}{9.56}\selectfont $7$}}
\end{pgfpicture}%
$ &
$
\centering
\begin{pgfpicture}{-2.00mm}{0.20mm}{12.50mm}{17.50mm}
\pgfsetxvec{\pgfpoint{0.70mm}{0mm}}
\pgfsetyvec{\pgfpoint{0mm}{0.70mm}}
\color[rgb]{0,0,0}\pgfsetlinewidth{0.30mm}\pgfsetdash{}{0mm}
\pgfcircle[fill]{\pgfxy(5.00,15.00)}{0.70mm}
\pgfcircle[stroke]{\pgfxy(5.00,15.00)}{0.70mm}
\pgfputat{\pgfxy(2.00,4.00)}{\pgfbox[bottom,left]{\fontsize{7.97}{9.56}\selectfont \makebox[0pt][r]{$1$}}}
\pgfmoveto{\pgfxy(10.00,15.00)}\pgflineto{\pgfxy(10.00,10.00)}\pgfstroke
\pgfcircle[fill]{\pgfxy(10.00,15.00)}{0.70mm}
\pgfcircle[stroke]{\pgfxy(10.00,15.00)}{0.70mm}
\pgfcircle[fill]{\pgfxy(10.00,10.00)}{0.70mm}
\pgfcircle[stroke]{\pgfxy(10.00,10.00)}{0.70mm}
\pgfmoveto{\pgfxy(5.00,15.00)}\pgflineto{\pgfxy(5.00,9.50)}\pgfstroke
\pgfputat{\pgfxy(2.00,14.00)}{\pgfbox[bottom,left]{\fontsize{7.97}{9.56}\selectfont \makebox[0pt][r]{$4$}}}
\pgfputat{\pgfxy(2.00,19.00)}{\pgfbox[bottom,left]{\fontsize{7.97}{9.56}\selectfont \makebox[0pt][r]{$6$}}}
\pgfcircle[fill]{\pgfxy(5.00,5.00)}{0.70mm}
\pgfcircle[stroke]{\pgfxy(5.00,5.00)}{0.70mm}
\pgfcircle[fill]{\pgfxy(5.00,10.00)}{0.70mm}
\pgfcircle[stroke]{\pgfxy(5.00,10.00)}{0.70mm}
\pgfmoveto{\pgfxy(5.00,5.00)}\pgflineto{\pgfxy(5.00,10.00)}\pgfstroke
\pgfmoveto{\pgfxy(5.00,5.00)}\pgflineto{\pgfxy(10.00,10.00)}\pgfstroke
\pgfputat{\pgfxy(2.00,9.00)}{\pgfbox[bottom,left]{\fontsize{7.97}{9.56}\selectfont \makebox[0pt][r]{$2$}}}
\pgfputat{\pgfxy(13.00,9.00)}{\pgfbox[bottom,left]{\fontsize{7.97}{9.56}\selectfont $3$}}
\pgfcircle[fill]{\pgfxy(10.00,20.00)}{0.70mm}
\pgfcircle[stroke]{\pgfxy(10.00,20.00)}{0.70mm}
\pgfmoveto{\pgfxy(5.00,20.00)}\pgflineto{\pgfxy(5.00,15.00)}\pgfstroke
\pgfmoveto{\pgfxy(5.00,15.00)}\pgflineto{\pgfxy(10.00,20.00)}\pgfstroke
\pgfcircle[fill]{\pgfxy(5.00,20.00)}{0.70mm}
\pgfcircle[stroke]{\pgfxy(5.00,20.00)}{0.70mm}
\pgfputat{\pgfxy(13.00,14.00)}{\pgfbox[bottom,left]{\fontsize{7.97}{9.56}\selectfont $5$}}
\pgfputat{\pgfxy(13.00,19.00)}{\pgfbox[bottom,left]{\fontsize{7.97}{9.56}\selectfont $7$}}
\end{pgfpicture}%
$ &
$
\centering
\begin{pgfpicture}{-2.00mm}{0.20mm}{16.00mm}{21.00mm}
\pgfsetxvec{\pgfpoint{0.70mm}{0mm}}
\pgfsetyvec{\pgfpoint{0mm}{0.70mm}}
\color[rgb]{0,0,0}\pgfsetlinewidth{0.30mm}\pgfsetdash{}{0mm}
\pgfputat{\pgfxy(2.00,4.00)}{\pgfbox[bottom,left]{\fontsize{7.97}{9.56}\selectfont \makebox[0pt][r]{$1$}}}
\pgfmoveto{\pgfxy(10.00,15.00)}\pgflineto{\pgfxy(10.00,10.00)}\pgfstroke
\pgfcircle[fill]{\pgfxy(10.00,15.00)}{0.70mm}
\pgfcircle[stroke]{\pgfxy(10.00,15.00)}{0.70mm}
\pgfcircle[fill]{\pgfxy(10.00,10.00)}{0.70mm}
\pgfcircle[stroke]{\pgfxy(10.00,10.00)}{0.70mm}
\pgfmoveto{\pgfxy(5.00,15.00)}\pgflineto{\pgfxy(5.00,9.50)}\pgfstroke
\pgfcircle[fill]{\pgfxy(5.00,5.00)}{0.70mm}
\pgfcircle[stroke]{\pgfxy(5.00,5.00)}{0.70mm}
\pgfcircle[fill]{\pgfxy(5.00,10.00)}{0.70mm}
\pgfcircle[stroke]{\pgfxy(5.00,10.00)}{0.70mm}
\pgfmoveto{\pgfxy(5.00,5.00)}\pgflineto{\pgfxy(5.00,10.00)}\pgfstroke
\pgfmoveto{\pgfxy(5.00,5.00)}\pgflineto{\pgfxy(10.00,10.00)}\pgfstroke
\pgfputat{\pgfxy(2.00,9.00)}{\pgfbox[bottom,left]{\fontsize{7.97}{9.56}\selectfont \makebox[0pt][r]{$2$}}}
\pgfputat{\pgfxy(13.00,9.00)}{\pgfbox[bottom,left]{\fontsize{7.97}{9.56}\selectfont $3$}}
\pgfputat{\pgfxy(13.00,14.00)}{\pgfbox[bottom,left]{\fontsize{7.97}{9.56}\selectfont $7$}}
\pgfcircle[fill]{\pgfxy(10.00,20.00)}{0.70mm}
\pgfcircle[stroke]{\pgfxy(10.00,20.00)}{0.70mm}
\pgfputat{\pgfxy(13.00,19.00)}{\pgfbox[bottom,left]{\fontsize{7.97}{9.56}\selectfont $6$}}
\pgfputat{\pgfxy(7.00,24.00)}{\pgfbox[bottom,left]{\fontsize{7.97}{9.56}\selectfont \makebox[0pt][r]{$8$}}}
\pgfcircle[fill]{\pgfxy(15.00,25.00)}{0.70mm}
\pgfcircle[stroke]{\pgfxy(15.00,25.00)}{0.70mm}
\pgfmoveto{\pgfxy(10.00,25.00)}\pgflineto{\pgfxy(10.00,20.00)}\pgfstroke
\pgfmoveto{\pgfxy(10.00,20.00)}\pgflineto{\pgfxy(15.00,25.00)}\pgfstroke
\pgfcircle[fill]{\pgfxy(10.00,25.00)}{0.70mm}
\pgfcircle[stroke]{\pgfxy(10.00,25.00)}{0.70mm}
\pgfputat{\pgfxy(18.00,24.00)}{\pgfbox[bottom,left]{\fontsize{7.97}{9.56}\selectfont $9$}}
\pgfcircle[fill]{\pgfxy(5.00,15.00)}{0.70mm}
\pgfcircle[stroke]{\pgfxy(5.00,15.00)}{0.70mm}
\pgfcircle[fill]{\pgfxy(5.00,20.00)}{0.70mm}
\pgfcircle[stroke]{\pgfxy(5.00,20.00)}{0.70mm}
\pgfmoveto{\pgfxy(10.00,20.00)}\pgflineto{\pgfxy(5.00,15.00)}\pgfstroke
\pgfmoveto{\pgfxy(5.00,20.00)}\pgflineto{\pgfxy(5.00,15.00)}\pgfstroke
\pgfputat{\pgfxy(2.00,19.00)}{\pgfbox[bottom,left]{\fontsize{7.97}{9.56}\selectfont \makebox[0pt][r]{$5$}}}
\pgfputat{\pgfxy(2.00,14.00)}{\pgfbox[bottom,left]{\fontsize{7.97}{9.56}\selectfont \makebox[0pt][r]{$4$}}}
\end{pgfpicture}%
$ &
$
\centering
\begin{pgfpicture}{-2.00mm}{0.20mm}{12.50mm}{21.00mm}
\pgfsetxvec{\pgfpoint{0.70mm}{0mm}}
\pgfsetyvec{\pgfpoint{0mm}{0.70mm}}
\color[rgb]{0,0,0}\pgfsetlinewidth{0.30mm}\pgfsetdash{}{0mm}
\pgfputat{\pgfxy(2.00,4.00)}{\pgfbox[bottom,left]{\fontsize{7.97}{9.56}\selectfont \makebox[0pt][r]{$1$}}}
\pgfmoveto{\pgfxy(10.00,15.00)}\pgflineto{\pgfxy(10.00,10.00)}\pgfstroke
\pgfcircle[fill]{\pgfxy(10.00,15.00)}{0.70mm}
\pgfcircle[stroke]{\pgfxy(10.00,15.00)}{0.70mm}
\pgfcircle[fill]{\pgfxy(10.00,10.00)}{0.70mm}
\pgfcircle[stroke]{\pgfxy(10.00,10.00)}{0.70mm}
\pgfmoveto{\pgfxy(5.00,15.00)}\pgflineto{\pgfxy(5.00,9.50)}\pgfstroke
\pgfcircle[fill]{\pgfxy(5.00,5.00)}{0.70mm}
\pgfcircle[stroke]{\pgfxy(5.00,5.00)}{0.70mm}
\pgfcircle[fill]{\pgfxy(5.00,10.00)}{0.70mm}
\pgfcircle[stroke]{\pgfxy(5.00,10.00)}{0.70mm}
\pgfmoveto{\pgfxy(5.00,5.00)}\pgflineto{\pgfxy(5.00,10.00)}\pgfstroke
\pgfmoveto{\pgfxy(5.00,5.00)}\pgflineto{\pgfxy(10.00,10.00)}\pgfstroke
\pgfputat{\pgfxy(2.00,9.00)}{\pgfbox[bottom,left]{\fontsize{7.97}{9.56}\selectfont \makebox[0pt][r]{$2$}}}
\pgfputat{\pgfxy(13.00,9.00)}{\pgfbox[bottom,left]{\fontsize{7.97}{9.56}\selectfont $3$}}
\pgfputat{\pgfxy(13.00,14.00)}{\pgfbox[bottom,left]{\fontsize{7.97}{9.56}\selectfont $7$}}
\pgfcircle[fill]{\pgfxy(10.00,20.00)}{0.70mm}
\pgfcircle[stroke]{\pgfxy(10.00,20.00)}{0.70mm}
\pgfputat{\pgfxy(2.00,24.00)}{\pgfbox[bottom,left]{\fontsize{7.97}{9.56}\selectfont \makebox[0pt][r]{$6$}}}
\pgfputat{\pgfxy(13.00,19.00)}{\pgfbox[bottom,left]{\fontsize{7.97}{9.56}\selectfont $8$}}
\pgfcircle[fill]{\pgfxy(10.00,25.00)}{0.70mm}
\pgfcircle[stroke]{\pgfxy(10.00,25.00)}{0.70mm}
\pgfmoveto{\pgfxy(5.00,25.00)}\pgflineto{\pgfxy(5.00,20.00)}\pgfstroke
\pgfmoveto{\pgfxy(10.00,25.00)}\pgflineto{\pgfxy(5.00,20.00)}\pgfstroke
\pgfcircle[fill]{\pgfxy(5.00,25.00)}{0.70mm}
\pgfcircle[stroke]{\pgfxy(5.00,25.00)}{0.70mm}
\pgfputat{\pgfxy(13.00,24.00)}{\pgfbox[bottom,left]{\fontsize{7.97}{9.56}\selectfont $9$}}
\pgfcircle[fill]{\pgfxy(5.00,15.00)}{0.70mm}
\pgfcircle[stroke]{\pgfxy(5.00,15.00)}{0.70mm}
\pgfcircle[fill]{\pgfxy(5.00,20.00)}{0.70mm}
\pgfcircle[stroke]{\pgfxy(5.00,20.00)}{0.70mm}
\pgfmoveto{\pgfxy(10.00,20.00)}\pgflineto{\pgfxy(5.00,15.00)}\pgfstroke
\pgfmoveto{\pgfxy(5.00,20.00)}\pgflineto{\pgfxy(5.00,15.00)}\pgfstroke
\pgfputat{\pgfxy(2.00,19.00)}{\pgfbox[bottom,left]{\fontsize{7.97}{9.56}\selectfont \makebox[0pt][r]{$5$}}}
\pgfputat{\pgfxy(2.00,14.00)}{\pgfbox[bottom,left]{\fontsize{7.97}{9.56}\selectfont \makebox[0pt][r]{$4$}}}
\end{pgfpicture}%
$ &
$
\centering
\begin{pgfpicture}{-2.00mm}{0.20mm}{12.50mm}{21.00mm}
\pgfsetxvec{\pgfpoint{0.70mm}{0mm}}
\pgfsetyvec{\pgfpoint{0mm}{0.70mm}}
\color[rgb]{0,0,0}\pgfsetlinewidth{0.30mm}\pgfsetdash{}{0mm}
\pgfputat{\pgfxy(2.00,4.00)}{\pgfbox[bottom,left]{\fontsize{7.97}{9.56}\selectfont \makebox[0pt][r]{$1$}}}
\pgfmoveto{\pgfxy(10.00,15.00)}\pgflineto{\pgfxy(10.00,10.00)}\pgfstroke
\pgfcircle[fill]{\pgfxy(10.00,15.00)}{0.70mm}
\pgfcircle[stroke]{\pgfxy(10.00,15.00)}{0.70mm}
\pgfcircle[fill]{\pgfxy(10.00,10.00)}{0.70mm}
\pgfcircle[stroke]{\pgfxy(10.00,10.00)}{0.70mm}
\pgfmoveto{\pgfxy(5.00,15.00)}\pgflineto{\pgfxy(5.00,9.50)}\pgfstroke
\pgfcircle[fill]{\pgfxy(5.00,5.00)}{0.70mm}
\pgfcircle[stroke]{\pgfxy(5.00,5.00)}{0.70mm}
\pgfcircle[fill]{\pgfxy(5.00,10.00)}{0.70mm}
\pgfcircle[stroke]{\pgfxy(5.00,10.00)}{0.70mm}
\pgfmoveto{\pgfxy(5.00,5.00)}\pgflineto{\pgfxy(5.00,10.00)}\pgfstroke
\pgfmoveto{\pgfxy(5.00,5.00)}\pgflineto{\pgfxy(10.00,10.00)}\pgfstroke
\pgfputat{\pgfxy(2.00,9.00)}{\pgfbox[bottom,left]{\fontsize{7.97}{9.56}\selectfont \makebox[0pt][r]{$2$}}}
\pgfputat{\pgfxy(13.00,9.00)}{\pgfbox[bottom,left]{\fontsize{7.97}{9.56}\selectfont $3$}}
\pgfputat{\pgfxy(13.00,14.00)}{\pgfbox[bottom,left]{\fontsize{7.97}{9.56}\selectfont $6$}}
\pgfcircle[fill]{\pgfxy(10.00,20.00)}{0.70mm}
\pgfcircle[stroke]{\pgfxy(10.00,20.00)}{0.70mm}
\pgfputat{\pgfxy(2.00,24.00)}{\pgfbox[bottom,left]{\fontsize{7.97}{9.56}\selectfont \makebox[0pt][r]{$7$}}}
\pgfputat{\pgfxy(13.00,19.00)}{\pgfbox[bottom,left]{\fontsize{7.97}{9.56}\selectfont $8$}}
\pgfcircle[fill]{\pgfxy(10.00,25.00)}{0.70mm}
\pgfcircle[stroke]{\pgfxy(10.00,25.00)}{0.70mm}
\pgfmoveto{\pgfxy(5.00,25.00)}\pgflineto{\pgfxy(5.00,20.00)}\pgfstroke
\pgfmoveto{\pgfxy(10.00,25.00)}\pgflineto{\pgfxy(5.00,20.00)}\pgfstroke
\pgfcircle[fill]{\pgfxy(5.00,25.00)}{0.70mm}
\pgfcircle[stroke]{\pgfxy(5.00,25.00)}{0.70mm}
\pgfputat{\pgfxy(13.00,24.00)}{\pgfbox[bottom,left]{\fontsize{7.97}{9.56}\selectfont $9$}}
\pgfcircle[fill]{\pgfxy(5.00,15.00)}{0.70mm}
\pgfcircle[stroke]{\pgfxy(5.00,15.00)}{0.70mm}
\pgfcircle[fill]{\pgfxy(5.00,20.00)}{0.70mm}
\pgfcircle[stroke]{\pgfxy(5.00,20.00)}{0.70mm}
\pgfmoveto{\pgfxy(10.00,20.00)}\pgflineto{\pgfxy(5.00,15.00)}\pgfstroke
\pgfmoveto{\pgfxy(5.00,20.00)}\pgflineto{\pgfxy(5.00,15.00)}\pgfstroke
\pgfputat{\pgfxy(2.00,19.00)}{\pgfbox[bottom,left]{\fontsize{7.97}{9.56}\selectfont \makebox[0pt][r]{$5$}}}
\pgfputat{\pgfxy(2.00,14.00)}{\pgfbox[bottom,left]{\fontsize{7.97}{9.56}\selectfont \makebox[0pt][r]{$4$}}}
\end{pgfpicture}%
$
\\
\hline
$p(\varphi(\pi))$ & 5 & 5 & 6 & 5 & 6 & 7
\\
\end{tabular}
\caption{Constructing a tree $\varphi(748591623)$ by a recursive algorithm}
\end{figure}

\begin{thm}
For all $n\ge 1$ and $k\in[n]$ The mapping $\varphi$ is a bijection between $\Ank$ and $\Tnk$ satisfying $$\pi_1=p(\varphi(\pi)).$$
\end{thm}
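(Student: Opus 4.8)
The plan is to prove all three assertions---that $\varphi$ is well defined, that $p(\varphi(\pi))=\pi_1$, and that $\varphi$ is bijective---simultaneously by induction. Since case~(a) replaces $n$ by $n-2$ while case~(b) keeps $n$ but replaces $k$ by $k-1$, the natural induction is on the pair $(n,k)$ ordered lexicographically. First one checks that the recursion terminates: in case~(b) the value $\pi_2$ is untouched by the transposition $(k-1\ k)$, since $\pi_2<k-1$, so repeatedly applying case~(b) strictly decreases $k$ while keeping $\pi_2$ fixed until $k=\pi_2+1$, at which point case~(a) applies and drops $n$ by $2$. The induction hypothesis I would carry is the \emph{full} statement for all smaller pairs, namely that $\varphi$ is a bijection $\A_{m,j}\to\T_{m,j}$ with $p(\varphi(\rho))=\rho_1$.

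First I would establish well-definedness together with the identity $p(\varphi(\pi))=k$. The recurring fact is that labels strictly increase along any root-to-vertex path, so the main chain reads $1=v_0<v_1<\dots<v_\ell=p(T)$; this is what makes $m$ (the minimal main-chain vertex exceeding $k$) well defined in case~(a) and forces its parent $j$ to satisfy $j\le k-2$. In each case one then verifies directly that the output is a $0$-$1$-$2$ increasing tree (the inserted or relocated vertex never creates out-degree $3$) and that the main chain now terminates at $k$: in case~(a) because $k-1$ becomes the smallest child of $j$ and $k$ the smallest child of $k-1$; in case~(b) because the induction hypothesis puts the endpoint of $T'$ at $k-1$, and both the structural move~(b1) and the relabelling~(b2) send this endpoint to $k$. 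The $p$-identity has to be part of the hypothesis, since (b1) and (b2) use $p(T')=k-1$ to locate $k-1$ and $k$ inside $T'$.

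For bijectivity I would exhibit the inverse by reading the case off the tree. Given $T\in\Tnk$, let $q$ be the parent of the endpoint $k$; since every parent is smaller than its child, either $q=k-1$ or $q\le k-2$. If $q\le k-2$, then $T$ came from case~(b2), and one recovers $T'$ by transposing the labels $k-1$ and $k$ (this is legitimate: the endpoint then has parent $<k-1$, and the vertex relabelled $k$ has all its children $>k$, so increasingness is preserved). If $q=k-1$, one compares the sibling of $k$ (the other child $b$ of $k-1$, if any) with the sibling of $k-1$ (the other child $c$ of $j$, if any): the tree comes from case~(a) exactly when $b$ exists and either $c$ does not or $b<c$, and from case~(b1) otherwise. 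In each branch one inverts the explicit local move, applies the inductive inverse to the smaller instance, and reassembles $\pi$ (prepending the letters $k$ and $k-1$ after relabelling in case~(a), or applying $(k-1\ k)$ in case~(b)). Matching forward and backward maps case by case yields injectivity and surjectivity by induction; alternatively, once injectivity is in hand one may simply invoke the known equality $\abs{\Ank}=\abs{\Tnk}$.

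The delicate point, and the step I expect to be the main obstacle, is precisely this case-distinction in the inverse along the boundary between~(a) and~(b1), together with the degenerate configurations in which $k$ or $k-1$ has fewer than two children. Here one must fix the convention in case~(b1) for which subtree of $k$ is reattached to $j$ and which to $k-1$ (namely that a lone child of $k$ is the one reattached to $j$), and check that with this convention the forward images of the two cases are genuinely disjoint: without it, a tree of the shape $j\to(k-1)\to\{k,m\}$ with $k-1$ the only child of $j$ could be claimed by both. Verifying that the criterion above is exhaustive and mutually exclusive across all degenerate subcases is routine, but it is where essentially all of the real checking lies.
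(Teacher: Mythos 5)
Your proposal follows essentially the same route as the paper: the paper's proof consists precisely of constructing the recursive inverse map, with the same case test (whether $k-1$ is the parent of the endpoint $k$, and if so whether the other child $m$ of $k-1$ exists and is smaller than the sibling $s$ of $k-1$), which is exactly your $b$/$c$ criterion. The extra points you flag---termination, carrying $p(\varphi(\pi))=\pi_1$ in the induction hypothesis, and the reattachment convention in case (b1)---are details the paper leaves implicit, but they elaborate the same argument rather than constitute a different one.
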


\begin{proof}
It is sufficient to construct the inverse mapping of $\varphi$.
Given $T \in \Tnk$ ($k=p(T)$), we define $\pi=\varphi^{-1}(T)$ recursively as follows:
\begin{enumerate}[(A)]
\item If $k-1$ is a parent of $k$ in $T$, then let $m(>k)$ be the another child of $k-1$ ($m=\infty$ if $k-1$ has only child $k$) and $s(>k)$ be a sibling of $k-1$ ($s=\infty$ if $k-1$ has no sibling).
\begin{enumerate}[(1)]
\item If $m<\infty$ and $m<s$ (Case~\ref{case:a}), then define $T'$ by deleting vertex $k-1$ and $k$ and their adjacent edges from $T$ and adding new edge $(m,j)$ where $j$ is a parent of $k-1$ in $T$. \label{case:A1}
    $$
    \centering
\begin{pgfpicture}{38.00mm}{29.20mm}{122.00mm}{74.00mm}
\pgfsetxvec{\pgfpoint{0.80mm}{0mm}}
\pgfsetyvec{\pgfpoint{0mm}{0.80mm}}
\color[rgb]{0,0,0}\pgfsetlinewidth{0.30mm}\pgfsetdash{}{0mm}
\pgfputat{\pgfxy(67.00,80.00)}{\pgfbox[bottom,left]{\fontsize{9.10}{10.93}\selectfont \makebox[0pt]{$T$}}}
\pgfsetdash{{0.30mm}{0.50mm}}{0mm}\pgfmoveto{\pgfxy(70.00,50.00)}\pgflineto{\pgfxy(70.00,40.00)}\pgfstroke
\pgfcircle[fill]{\pgfxy(70.00,50.00)}{0.80mm}
\pgfsetdash{}{0mm}\pgfcircle[stroke]{\pgfxy(70.00,50.00)}{0.80mm}
\pgfcircle[fill]{\pgfxy(70.00,40.00)}{0.80mm}
\pgfcircle[stroke]{\pgfxy(70.00,40.00)}{0.80mm}
\pgfmoveto{\pgfxy(70.00,50.00)}\pgflineto{\pgfxy(80.00,60.00)}\pgfstroke
\pgfcircle[fill]{\pgfxy(70.00,70.00)}{0.80mm}
\pgfcircle[stroke]{\pgfxy(70.00,70.00)}{0.80mm}
\pgfmoveto{\pgfxy(70.00,40.00)}\pgflineto{\pgfxy(80.00,50.00)}\pgfstroke
\pgfcircle[fill]{\pgfxy(80.00,50.00)}{0.80mm}
\pgfcircle[stroke]{\pgfxy(80.00,50.00)}{0.80mm}
\pgfputat{\pgfxy(67.00,69.00)}{\pgfbox[bottom,left]{\fontsize{9.10}{10.93}\selectfont \makebox[0pt][r]{$k$}}}
\pgfputat{\pgfxy(67.00,59.00)}{\pgfbox[bottom,left]{\fontsize{9.10}{10.93}\selectfont \makebox[0pt][r]{$k-1$}}}
\pgfcircle[fill]{\pgfxy(70.00,60.00)}{0.80mm}
\pgfcircle[stroke]{\pgfxy(70.00,60.00)}{0.80mm}
\pgfmoveto{\pgfxy(70.00,50.00)}\pgflineto{\pgfxy(70.00,60.00)}\pgfstroke
\pgfmoveto{\pgfxy(80.00,70.00)}\pgflineto{\pgfxy(70.00,60.00)}\pgfstroke
\pgfmoveto{\pgfxy(70.00,70.00)}\pgflineto{\pgfxy(70.00,60.00)}\pgfstroke
\pgfellipse[stroke]{\pgfxy(80.00,79.49)}{\pgfxy(6.00,0.00)}{\pgfxy(0.00,9.51)}
\pgfputat{\pgfxy(83.00,77.00)}{\pgfbox[bottom,left]{\fontsize{9.10}{10.93}\selectfont \makebox[0pt]{$B$}}}
\pgfcircle[fill]{\pgfxy(80.00,70.00)}{0.80mm}
\pgfcircle[stroke]{\pgfxy(80.00,70.00)}{0.80mm}
\pgfputat{\pgfxy(80.00,72.00)}{\pgfbox[bottom,left]{\fontsize{9.10}{10.93}\selectfont \makebox[0pt][r]{$m$}}}
\pgfputat{\pgfxy(68.00,48.00)}{\pgfbox[bottom,left]{\fontsize{9.10}{10.93}\selectfont \makebox[0pt][r]{$j$}}}
\pgfcircle[fill]{\pgfxy(80.00,60.00)}{0.80mm}
\pgfcircle[stroke]{\pgfxy(80.00,60.00)}{0.80mm}
\pgfputat{\pgfxy(86.00,63.00)}{\pgfbox[bottom,left]{\fontsize{9.10}{10.93}\selectfont \makebox[0pt]{$A$}}}
\pgfputat{\pgfxy(80.00,62.00)}{\pgfbox[bottom,left]{\fontsize{9.10}{10.93}\selectfont \makebox[0pt]{$s$}}}
\pgfellipse[stroke]{\pgfxy(83.50,63.49)}{\pgfxy(6.50,0.00)}{\pgfxy(0.00,4.51)}
\pgfellipse[stroke]{\pgfxy(83.50,53.49)}{\pgfxy(6.50,0.00)}{\pgfxy(0.00,4.51)}
\pgfsetlinewidth{1.20mm}\pgfmoveto{\pgfxy(95.00,65.00)}\pgflineto{\pgfxy(105.00,65.00)}\pgfstroke
\pgfmoveto{\pgfxy(105.00,65.00)}\pgflineto{\pgfxy(102.20,65.70)}\pgflineto{\pgfxy(102.20,64.30)}\pgflineto{\pgfxy(105.00,65.00)}\pgfclosepath\pgffill
\pgfmoveto{\pgfxy(105.00,65.00)}\pgflineto{\pgfxy(102.20,65.70)}\pgflineto{\pgfxy(102.20,64.30)}\pgflineto{\pgfxy(105.00,65.00)}\pgfclosepath\pgfstroke
\pgfputat{\pgfxy(130.00,80.00)}{\pgfbox[bottom,left]{\fontsize{9.10}{10.93}\selectfont \makebox[0pt]{$T'$}}}
\pgfcircle[fill]{\pgfxy(80.00,84.00)}{0.80mm}
\pgfsetlinewidth{0.30mm}\pgfcircle[stroke]{\pgfxy(80.00,84.00)}{0.80mm}
\pgfsetdash{{0.30mm}{0.50mm}}{0mm}\pgfmoveto{\pgfxy(80.00,84.00)}\pgflineto{\pgfxy(80.00,70.00)}\pgfstroke
\pgfputat{\pgfxy(78.00,83.00)}{\pgfbox[bottom,left]{\fontsize{9.10}{10.93}\selectfont \makebox[0pt][r]{i}}}
\pgfmoveto{\pgfxy(120.00,50.00)}\pgflineto{\pgfxy(120.00,40.00)}\pgfstroke
\pgfcircle[fill]{\pgfxy(120.00,50.00)}{0.80mm}
\pgfsetdash{}{0mm}\pgfcircle[stroke]{\pgfxy(120.00,50.00)}{0.80mm}
\pgfcircle[fill]{\pgfxy(120.00,40.00)}{0.80mm}
\pgfcircle[stroke]{\pgfxy(120.00,40.00)}{0.80mm}
\pgfmoveto{\pgfxy(120.00,50.00)}\pgflineto{\pgfxy(130.00,60.00)}\pgfstroke
\pgfmoveto{\pgfxy(120.00,40.00)}\pgflineto{\pgfxy(130.00,50.00)}\pgfstroke
\pgfcircle[fill]{\pgfxy(130.00,50.00)}{0.80mm}
\pgfcircle[stroke]{\pgfxy(130.00,50.00)}{0.80mm}
\pgfmoveto{\pgfxy(120.00,50.00)}\pgflineto{\pgfxy(120.00,60.00)}\pgfstroke
\pgfellipse[stroke]{\pgfxy(120.00,69.49)}{\pgfxy(6.00,0.00)}{\pgfxy(0.00,9.51)}
\pgfputat{\pgfxy(123.00,67.00)}{\pgfbox[bottom,left]{\fontsize{9.10}{10.93}\selectfont \makebox[0pt]{$B$}}}
\pgfcircle[fill]{\pgfxy(120.00,60.00)}{0.80mm}
\pgfcircle[stroke]{\pgfxy(120.00,60.00)}{0.80mm}
\pgfputat{\pgfxy(120.00,62.00)}{\pgfbox[bottom,left]{\fontsize{9.10}{10.93}\selectfont \makebox[0pt][r]{$m$}}}
\pgfputat{\pgfxy(118.00,48.00)}{\pgfbox[bottom,left]{\fontsize{9.10}{10.93}\selectfont \makebox[0pt][r]{$j$}}}
\pgfcircle[fill]{\pgfxy(130.00,60.00)}{0.80mm}
\pgfcircle[stroke]{\pgfxy(130.00,60.00)}{0.80mm}
\pgfputat{\pgfxy(136.00,63.00)}{\pgfbox[bottom,left]{\fontsize{9.10}{10.93}\selectfont \makebox[0pt]{$A$}}}
\pgfputat{\pgfxy(130.00,62.00)}{\pgfbox[bottom,left]{\fontsize{9.10}{10.93}\selectfont \makebox[0pt]{$s$}}}
\pgfellipse[stroke]{\pgfxy(133.50,63.49)}{\pgfxy(6.50,0.00)}{\pgfxy(0.00,4.51)}
\pgfellipse[stroke]{\pgfxy(133.50,53.49)}{\pgfxy(6.50,0.00)}{\pgfxy(0.00,4.51)}
\pgfcircle[fill]{\pgfxy(120.00,74.00)}{0.80mm}
\pgfcircle[stroke]{\pgfxy(120.00,74.00)}{0.80mm}
\pgfsetdash{{0.30mm}{0.50mm}}{0mm}\pgfmoveto{\pgfxy(120.00,74.00)}\pgflineto{\pgfxy(120.00,60.00)}\pgfstroke
\pgfputat{\pgfxy(118.00,73.00)}{\pgfbox[bottom,left]{\fontsize{9.10}{10.93}\selectfont \makebox[0pt][r]{i}}}
\end{pgfpicture}%
$$
    We get $\pi'=\varphi^{-1}(T'') \in \A_{n-2,i-2}$ where $T''\in \A_{n-2,i-2}$ is induced from $T'$ relabeling by $[n-2]$, where $i>k$. Relabeling $\pi'$ by $\set{1,\dots,k-2,k+1,\dots,n}$, denoted by $\pi''$, and define $\pi=(k,k-1,\pi'') \in \Ank$ where $\pi''_1=i$.
\item If $m=\infty$ or $m>s$ (Case~\ref{case:b1}), then we get the tree $T' \in \T_{n,k-1}$ modifying as follows: \label{case:A2}
    $$
    \centering
\begin{pgfpicture}{38.00mm}{29.20mm}{122.00mm}{70.00mm}
\pgfsetxvec{\pgfpoint{0.80mm}{0mm}}
\pgfsetyvec{\pgfpoint{0mm}{0.80mm}}
\color[rgb]{0,0,0}\pgfsetlinewidth{0.30mm}\pgfsetdash{}{0mm}
\pgfputat{\pgfxy(80.00,80.00)}{\pgfbox[bottom,left]{\fontsize{9.10}{10.93}\selectfont \makebox[0pt]{$T$}}}
\pgfsetdash{{0.30mm}{0.50mm}}{0mm}\pgfmoveto{\pgfxy(70.00,50.00)}\pgflineto{\pgfxy(70.00,40.00)}\pgfstroke
\pgfcircle[fill]{\pgfxy(70.00,50.00)}{0.80mm}
\pgfsetdash{}{0mm}\pgfcircle[stroke]{\pgfxy(70.00,50.00)}{0.80mm}
\pgfcircle[fill]{\pgfxy(70.00,40.00)}{0.80mm}
\pgfcircle[stroke]{\pgfxy(70.00,40.00)}{0.80mm}
\pgfmoveto{\pgfxy(70.00,50.00)}\pgflineto{\pgfxy(80.00,60.00)}\pgfstroke
\pgfcircle[fill]{\pgfxy(70.00,70.00)}{0.80mm}
\pgfcircle[stroke]{\pgfxy(70.00,70.00)}{0.80mm}
\pgfmoveto{\pgfxy(70.00,40.00)}\pgflineto{\pgfxy(80.00,50.00)}\pgfstroke
\pgfcircle[fill]{\pgfxy(80.00,50.00)}{0.80mm}
\pgfcircle[stroke]{\pgfxy(80.00,50.00)}{0.80mm}
\pgfputat{\pgfxy(67.00,69.00)}{\pgfbox[bottom,left]{\fontsize{9.10}{10.93}\selectfont \makebox[0pt][r]{$k$}}}
\pgfputat{\pgfxy(67.00,59.00)}{\pgfbox[bottom,left]{\fontsize{9.10}{10.93}\selectfont \makebox[0pt][r]{$k-1$}}}
\pgfcircle[fill]{\pgfxy(70.00,60.00)}{0.80mm}
\pgfcircle[stroke]{\pgfxy(70.00,60.00)}{0.80mm}
\pgfmoveto{\pgfxy(70.00,50.00)}\pgflineto{\pgfxy(70.00,60.00)}\pgfstroke
\pgfmoveto{\pgfxy(80.00,70.00)}\pgflineto{\pgfxy(70.00,60.00)}\pgfstroke
\pgfmoveto{\pgfxy(70.00,70.00)}\pgflineto{\pgfxy(70.00,60.00)}\pgfstroke
\pgfellipse[stroke]{\pgfxy(83.50,73.49)}{\pgfxy(6.50,0.00)}{\pgfxy(0.00,4.51)}
\pgfputat{\pgfxy(86.00,73.00)}{\pgfbox[bottom,left]{\fontsize{9.10}{10.93}\selectfont \makebox[0pt]{$B$}}}
\pgfcircle[fill]{\pgfxy(80.00,70.00)}{0.80mm}
\pgfcircle[stroke]{\pgfxy(80.00,70.00)}{0.80mm}
\pgfputat{\pgfxy(80.00,72.00)}{\pgfbox[bottom,left]{\fontsize{9.10}{10.93}\selectfont \makebox[0pt]{$m$}}}
\pgfputat{\pgfxy(68.00,48.00)}{\pgfbox[bottom,left]{\fontsize{9.10}{10.93}\selectfont \makebox[0pt][r]{$j$}}}
\pgfcircle[fill]{\pgfxy(80.00,60.00)}{0.80mm}
\pgfcircle[stroke]{\pgfxy(80.00,60.00)}{0.80mm}
\pgfputat{\pgfxy(86.00,63.00)}{\pgfbox[bottom,left]{\fontsize{9.10}{10.93}\selectfont \makebox[0pt]{$A$}}}
\pgfputat{\pgfxy(80.00,62.00)}{\pgfbox[bottom,left]{\fontsize{9.10}{10.93}\selectfont \makebox[0pt]{$s$}}}
\pgfellipse[stroke]{\pgfxy(83.50,63.49)}{\pgfxy(6.50,0.00)}{\pgfxy(0.00,4.51)}
\pgfellipse[stroke]{\pgfxy(83.50,53.49)}{\pgfxy(6.50,0.00)}{\pgfxy(0.00,4.51)}
\pgfsetlinewidth{1.20mm}\pgfmoveto{\pgfxy(95.00,65.00)}\pgflineto{\pgfxy(105.00,65.00)}\pgfstroke
\pgfmoveto{\pgfxy(105.00,65.00)}\pgflineto{\pgfxy(102.20,65.70)}\pgflineto{\pgfxy(102.20,64.30)}\pgflineto{\pgfxy(105.00,65.00)}\pgfclosepath\pgffill
\pgfmoveto{\pgfxy(105.00,65.00)}\pgflineto{\pgfxy(102.20,65.70)}\pgflineto{\pgfxy(102.20,64.30)}\pgflineto{\pgfxy(105.00,65.00)}\pgfclosepath\pgfstroke
\pgfputat{\pgfxy(140.00,80.00)}{\pgfbox[bottom,left]{\fontsize{9.10}{10.93}\selectfont \makebox[0pt]{$T'$}}}
\pgfputat{\pgfxy(117.00,59.00)}{\pgfbox[bottom,left]{\fontsize{9.10}{10.93}\selectfont \makebox[0pt][r]{$k-1$}}}
\pgfsetdash{{0.30mm}{0.50mm}}{0mm}\pgfsetlinewidth{0.30mm}\pgfmoveto{\pgfxy(120.00,50.00)}\pgflineto{\pgfxy(120.00,40.00)}\pgfstroke
\pgfcircle[fill]{\pgfxy(130.00,60.00)}{0.80mm}
\pgfsetdash{}{0mm}\pgfcircle[stroke]{\pgfxy(130.00,60.00)}{0.80mm}
\pgfcircle[fill]{\pgfxy(120.00,50.00)}{0.80mm}
\pgfcircle[stroke]{\pgfxy(120.00,50.00)}{0.80mm}
\pgfcircle[fill]{\pgfxy(120.00,40.00)}{0.80mm}
\pgfcircle[stroke]{\pgfxy(120.00,40.00)}{0.80mm}
\pgfmoveto{\pgfxy(130.00,60.00)}\pgflineto{\pgfxy(140.00,70.00)}\pgfstroke
\pgfmoveto{\pgfxy(120.00,40.00)}\pgflineto{\pgfxy(130.00,50.00)}\pgfstroke
\pgfcircle[fill]{\pgfxy(130.00,50.00)}{0.80mm}
\pgfcircle[stroke]{\pgfxy(130.00,50.00)}{0.80mm}
\pgfputat{\pgfxy(133.00,59.00)}{\pgfbox[bottom,left]{\fontsize{9.10}{10.93}\selectfont $k$}}
\pgfcircle[fill]{\pgfxy(120.00,60.00)}{0.80mm}
\pgfcircle[stroke]{\pgfxy(120.00,60.00)}{0.80mm}
\pgfmoveto{\pgfxy(120.00,50.00)}\pgflineto{\pgfxy(120.00,60.00)}\pgfstroke
\pgfputat{\pgfxy(117.00,49.00)}{\pgfbox[bottom,left]{\fontsize{9.10}{10.93}\selectfont \makebox[0pt][r]{$j$}}}
\pgfmoveto{\pgfxy(130.00,60.00)}\pgflineto{\pgfxy(120.00,50.00)}\pgfstroke
\pgfmoveto{\pgfxy(130.00,70.00)}\pgflineto{\pgfxy(130.00,60.00)}\pgfstroke
\pgfellipse[stroke]{\pgfxy(130.00,77.49)}{\pgfxy(5.00,0.00)}{\pgfxy(0.00,7.51)}
\pgfputat{\pgfxy(130.00,78.00)}{\pgfbox[bottom,left]{\fontsize{9.10}{10.93}\selectfont \makebox[0pt]{$A$}}}
\pgfcircle[fill]{\pgfxy(130.00,70.00)}{0.80mm}
\pgfcircle[stroke]{\pgfxy(130.00,70.00)}{0.80mm}
\pgfputat{\pgfxy(140.00,72.00)}{\pgfbox[bottom,left]{\fontsize{9.10}{10.93}\selectfont \makebox[0pt]{$m$}}}
\pgfcircle[fill]{\pgfxy(140.00,70.00)}{0.80mm}
\pgfcircle[stroke]{\pgfxy(140.00,70.00)}{0.80mm}
\pgfputat{\pgfxy(146.00,73.00)}{\pgfbox[bottom,left]{\fontsize{9.10}{10.93}\selectfont \makebox[0pt]{$B$}}}
\pgfputat{\pgfxy(130.00,72.00)}{\pgfbox[bottom,left]{\fontsize{9.10}{10.93}\selectfont \makebox[0pt]{$s$}}}
\pgfellipse[stroke]{\pgfxy(143.50,73.49)}{\pgfxy(6.50,0.00)}{\pgfxy(0.00,4.51)}
\pgfellipse[stroke]{\pgfxy(133.50,53.49)}{\pgfxy(6.50,0.00)}{\pgfxy(0.00,4.51)}
\end{pgfpicture}%
$$
    Define $\pi'=\varphi^{-1}(T') \in \A_{n,k-1}$ and $\pi=(k-1~k)\pi' \in \A_{n,k}$ (exchange $k-1$ and $k$ in $\pi'$).
\end{enumerate}
\item If $k-1$ is not a parent of $k$ in $T$ (Case~\ref{case:b2}), then we get the tree $T' \in \Tnk$ exchanging the labels $k-1$ and $k$ in $T$. \label{case:B}
    $$
    \centering
\begin{pgfpicture}{38.00mm}{29.20mm}{122.00mm}{70.00mm}
\pgfsetxvec{\pgfpoint{0.80mm}{0mm}}
\pgfsetyvec{\pgfpoint{0mm}{0.80mm}}
\color[rgb]{0,0,0}\pgfsetlinewidth{0.30mm}\pgfsetdash{}{0mm}
\pgfsetlinewidth{1.20mm}\pgfmoveto{\pgfxy(95.00,65.00)}\pgflineto{\pgfxy(105.00,65.00)}\pgfstroke
\pgfmoveto{\pgfxy(105.00,65.00)}\pgflineto{\pgfxy(102.20,65.70)}\pgflineto{\pgfxy(102.20,64.30)}\pgflineto{\pgfxy(105.00,65.00)}\pgfclosepath\pgffill
\pgfmoveto{\pgfxy(105.00,65.00)}\pgflineto{\pgfxy(102.20,65.70)}\pgflineto{\pgfxy(102.20,64.30)}\pgflineto{\pgfxy(105.00,65.00)}\pgfclosepath\pgfstroke
\pgfputat{\pgfxy(140.00,80.00)}{\pgfbox[bottom,left]{\fontsize{9.10}{10.93}\selectfont \makebox[0pt]{$T'$}}}
\pgfputat{\pgfxy(117.00,59.00)}{\pgfbox[bottom,left]{\fontsize{9.10}{10.93}\selectfont \makebox[0pt][r]{$k-1$}}}
\pgfsetdash{{0.30mm}{0.50mm}}{0mm}\pgfsetlinewidth{0.30mm}\pgfmoveto{\pgfxy(120.00,50.00)}\pgflineto{\pgfxy(120.00,40.00)}\pgfstroke
\pgfcircle[fill]{\pgfxy(130.00,60.00)}{0.80mm}
\pgfsetdash{}{0mm}\pgfcircle[stroke]{\pgfxy(130.00,60.00)}{0.80mm}
\pgfcircle[fill]{\pgfxy(120.00,50.00)}{0.80mm}
\pgfcircle[stroke]{\pgfxy(120.00,50.00)}{0.80mm}
\pgfcircle[fill]{\pgfxy(120.00,40.00)}{0.80mm}
\pgfcircle[stroke]{\pgfxy(120.00,40.00)}{0.80mm}
\pgfmoveto{\pgfxy(130.00,60.00)}\pgflineto{\pgfxy(140.00,70.00)}\pgfstroke
\pgfmoveto{\pgfxy(120.00,40.00)}\pgflineto{\pgfxy(130.00,50.00)}\pgfstroke
\pgfcircle[fill]{\pgfxy(130.00,50.00)}{0.80mm}
\pgfcircle[stroke]{\pgfxy(130.00,50.00)}{0.80mm}
\pgfputat{\pgfxy(132.00,52.00)}{\pgfbox[bottom,left]{\fontsize{9.10}{10.93}\selectfont $k$}}
\pgfcircle[fill]{\pgfxy(120.00,60.00)}{0.80mm}
\pgfcircle[stroke]{\pgfxy(120.00,60.00)}{0.80mm}
\pgfmoveto{\pgfxy(120.00,50.00)}\pgflineto{\pgfxy(120.00,60.00)}\pgfstroke
\pgfputat{\pgfxy(117.00,49.00)}{\pgfbox[bottom,left]{\fontsize{9.10}{10.93}\selectfont \makebox[0pt][r]{$j$}}}
\pgfmoveto{\pgfxy(130.00,60.00)}\pgflineto{\pgfxy(120.00,50.00)}\pgfstroke
\pgfmoveto{\pgfxy(130.00,70.00)}\pgflineto{\pgfxy(130.00,60.00)}\pgfstroke
\pgfellipse[stroke]{\pgfxy(130.00,77.49)}{\pgfxy(5.00,0.00)}{\pgfxy(0.00,7.51)}
\pgfputat{\pgfxy(130.00,77.00)}{\pgfbox[bottom,left]{\fontsize{9.10}{10.93}\selectfont \makebox[0pt]{$A$}}}
\pgfcircle[fill]{\pgfxy(130.00,70.00)}{0.80mm}
\pgfcircle[stroke]{\pgfxy(130.00,70.00)}{0.80mm}
\pgfcircle[fill]{\pgfxy(140.00,70.00)}{0.80mm}
\pgfcircle[stroke]{\pgfxy(140.00,70.00)}{0.80mm}
\pgfputat{\pgfxy(144.00,72.00)}{\pgfbox[bottom,left]{\fontsize{9.10}{10.93}\selectfont \makebox[0pt]{$B$}}}
\pgfellipse[stroke]{\pgfxy(143.50,73.49)}{\pgfxy(6.50,0.00)}{\pgfxy(0.00,4.51)}
\pgfellipse[stroke]{\pgfxy(133.50,53.49)}{\pgfxy(6.50,0.00)}{\pgfxy(0.00,4.51)}
\pgfputat{\pgfxy(80.00,80.00)}{\pgfbox[bottom,left]{\fontsize{9.10}{10.93}\selectfont \makebox[0pt]{$T$}}}
\pgfputat{\pgfxy(57.00,59.00)}{\pgfbox[bottom,left]{\fontsize{9.10}{10.93}\selectfont \makebox[0pt][r]{$k$}}}
\pgfsetdash{{0.30mm}{0.50mm}}{0mm}\pgfmoveto{\pgfxy(60.00,50.00)}\pgflineto{\pgfxy(60.00,40.00)}\pgfstroke
\pgfcircle[fill]{\pgfxy(70.00,60.00)}{0.80mm}
\pgfsetdash{}{0mm}\pgfcircle[stroke]{\pgfxy(70.00,60.00)}{0.80mm}
\pgfcircle[fill]{\pgfxy(60.00,50.00)}{0.80mm}
\pgfcircle[stroke]{\pgfxy(60.00,50.00)}{0.80mm}
\pgfcircle[fill]{\pgfxy(60.00,40.00)}{0.80mm}
\pgfcircle[stroke]{\pgfxy(60.00,40.00)}{0.80mm}
\pgfmoveto{\pgfxy(70.00,60.00)}\pgflineto{\pgfxy(80.00,70.00)}\pgfstroke
\pgfmoveto{\pgfxy(60.00,40.00)}\pgflineto{\pgfxy(70.00,50.00)}\pgfstroke
\pgfcircle[fill]{\pgfxy(70.00,50.00)}{0.80mm}
\pgfcircle[stroke]{\pgfxy(70.00,50.00)}{0.80mm}
\pgfputat{\pgfxy(74.00,52.00)}{\pgfbox[bottom,left]{\fontsize{9.10}{10.93}\selectfont \makebox[0pt]{$k-1$}}}
\pgfcircle[fill]{\pgfxy(60.00,60.00)}{0.80mm}
\pgfcircle[stroke]{\pgfxy(60.00,60.00)}{0.80mm}
\pgfmoveto{\pgfxy(60.00,50.00)}\pgflineto{\pgfxy(60.00,60.00)}\pgfstroke
\pgfputat{\pgfxy(57.00,49.00)}{\pgfbox[bottom,left]{\fontsize{9.10}{10.93}\selectfont \makebox[0pt][r]{$j$}}}
\pgfmoveto{\pgfxy(70.00,60.00)}\pgflineto{\pgfxy(60.00,50.00)}\pgfstroke
\pgfmoveto{\pgfxy(70.00,70.00)}\pgflineto{\pgfxy(70.00,60.00)}\pgfstroke
\pgfellipse[stroke]{\pgfxy(70.00,77.49)}{\pgfxy(5.00,0.00)}{\pgfxy(0.00,7.51)}
\pgfcircle[fill]{\pgfxy(70.00,70.00)}{0.80mm}
\pgfcircle[stroke]{\pgfxy(70.00,70.00)}{0.80mm}
\pgfcircle[fill]{\pgfxy(80.00,70.00)}{0.80mm}
\pgfcircle[stroke]{\pgfxy(80.00,70.00)}{0.80mm}
\pgfellipse[stroke]{\pgfxy(83.50,73.49)}{\pgfxy(6.50,0.00)}{\pgfxy(0.00,4.51)}
\pgfellipse[stroke]{\pgfxy(73.50,53.49)}{\pgfxy(6.50,0.00)}{\pgfxy(0.00,4.51)}
\pgfputat{\pgfxy(70.00,77.00)}{\pgfbox[bottom,left]{\fontsize{9.10}{10.93}\selectfont \makebox[0pt]{$A$}}}
\pgfputat{\pgfxy(84.00,72.00)}{\pgfbox[bottom,left]{\fontsize{9.10}{10.93}\selectfont \makebox[0pt]{$B$}}}
\end{pgfpicture}%
$$
    Define $\pi'=\varphi^{-1}(T') \in \A_{n,k-1}$ and $\pi=(k-1~k)\pi' \in \A_{n,k}$ (exchange $k-1$ and $k$ in $\pi'$).
\end{enumerate}
\end{proof}

\begin{rmk}
By considering a description of $\varphi$, given a elements $\pi \in \Ank$, $\pi$ in Case~\eqref{case:a} can be induced from $\cup_{i=k+1}^{n}\A_{n-2,i-2}$ and $\pi$ in Case~\eqref{case:b} from $\A_{n,k-1}$.
It yields that two following sets are {\em isomorphic} $$\Ank \simeq \A_{n,k-1} \cup (\cup_{i=k+1}^{n}\A_{n-2,i-2}),$$
for $2\le k \le n$.
We get the recurrence relation $$a_{n,k}=a_{n,k-1}+\sum_{i=k+1}^{n}a_{n-2,i-2},$$ where $a_{n,k}$ is the cardinality of the set $\Ank$.
Indeed, we are able to generalize this recurrence relation. Let $a_{n,k}(q,p)=\sum_{\sigma \in \Ank} q^{\inv(\pi)}p^{\312(\pi)}$. We have also $$a_{n,k}(q,p)=q~p~a_{n,k-1}(q,p)+q^{2k-3}\sum_{i=k+1}^{n}a_{n-2,i-2}(q,p),$$
for $2\le k \le n$.

Similarly, by considering a description of $\varphi^{-1}$, given a elements $T \in \Tnk$, $T$ in Case~\eqref{case:A1} can be induced from $\cup_{i=k+1}^{n}\T_{n-2,i-2}$ and $T$ in Case~\eqref{case:A2} and Case~\eqref{case:B} from $\T_{n,k-1}$.
It yields $$\Tnk \simeq \T_{n,k-1} \cup (\cup_{i=k+1}^{n}\T_{n-2,i-2}).$$
We get the recurrence relation $$t_{n,k}=t_{n,k-1}+\sum_{i=k+1}^{n}t_{n-2,i-2},$$ where $t_{n,k}$ is the cardinality of the set $\Tnk$.
\end{rmk}

\begin{ex} The following is the table of $\varphi$ on $\A_4$.
$$
\begin{tabular}{c||c|c|c|c|c}
$\pi$ & 2143 & 3142 & 3241 & 4132 & 4231
\\
\hline
$\pi_1$ & 2 & 3 & 3 & 4 & 4
\\
\hline
$\inv(\pi)$ & 2 & 3 & 3 & 4 & 4
\\
\hline
$\312(\pi)$ & 0 & 1 & 0 & 2 & 1
\\
\hline
\hline
$\varphi(\pi)$&
$
\centering
\begin{pgfpicture}{2.20mm}{3.70mm}{15.30mm}{17.50mm}
\pgfsetxvec{\pgfpoint{0.70mm}{0mm}}
\pgfsetyvec{\pgfpoint{0mm}{0.70mm}}
\color[rgb]{0,0,0}\pgfsetlinewidth{0.30mm}\pgfsetdash{}{0mm}
\pgfcircle[fill]{\pgfxy(10.00,10.00)}{0.70mm}
\pgfcircle[stroke]{\pgfxy(10.00,10.00)}{0.70mm}
\pgfcircle[fill]{\pgfxy(10.00,15.00)}{0.70mm}
\pgfcircle[stroke]{\pgfxy(10.00,15.00)}{0.70mm}
\pgfcircle[fill]{\pgfxy(15.00,15.00)}{0.70mm}
\pgfcircle[stroke]{\pgfxy(15.00,15.00)}{0.70mm}
\pgfcircle[fill]{\pgfxy(15.00,20.00)}{0.70mm}
\pgfcircle[stroke]{\pgfxy(15.00,20.00)}{0.70mm}
\pgfmoveto{\pgfxy(10.00,15.00)}\pgflineto{\pgfxy(10.00,10.00)}\pgfstroke
\pgfmoveto{\pgfxy(10.00,10.00)}\pgflineto{\pgfxy(15.00,15.00)}\pgfstroke
\pgfmoveto{\pgfxy(15.00,20.00)}\pgflineto{\pgfxy(15.00,15.00)}\pgfstroke
\pgfputat{\pgfxy(8.00,14.00)}{\pgfbox[bottom,left]{\fontsize{7.97}{9.56}\selectfont \makebox[0pt][r]{2}}}
\pgfputat{\pgfxy(8.00,9.00)}{\pgfbox[bottom,left]{\fontsize{7.97}{9.56}\selectfont \makebox[0pt][r]{1}}}
\pgfputat{\pgfxy(17.00,14.00)}{\pgfbox[bottom,left]{\fontsize{7.97}{9.56}\selectfont 3}}
\pgfputat{\pgfxy(17.00,19.00)}{\pgfbox[bottom,left]{\fontsize{7.97}{9.56}\selectfont 4}}
\end{pgfpicture}%
$ &
$
\centering
\begin{pgfpicture}{2.20mm}{3.70mm}{15.30mm}{17.50mm}
\pgfsetxvec{\pgfpoint{0.70mm}{0mm}}
\pgfsetyvec{\pgfpoint{0mm}{0.70mm}}
\color[rgb]{0,0,0}\pgfsetlinewidth{0.30mm}\pgfsetdash{}{0mm}
\pgfcircle[fill]{\pgfxy(10.00,10.00)}{0.70mm}
\pgfcircle[stroke]{\pgfxy(10.00,10.00)}{0.70mm}
\pgfcircle[fill]{\pgfxy(10.00,15.00)}{0.70mm}
\pgfcircle[stroke]{\pgfxy(10.00,15.00)}{0.70mm}
\pgfcircle[fill]{\pgfxy(10.00,20.00)}{0.70mm}
\pgfcircle[stroke]{\pgfxy(10.00,20.00)}{0.70mm}
\pgfcircle[fill]{\pgfxy(15.00,15.00)}{0.70mm}
\pgfcircle[stroke]{\pgfxy(15.00,15.00)}{0.70mm}
\pgfmoveto{\pgfxy(10.00,15.00)}\pgflineto{\pgfxy(10.00,10.00)}\pgfstroke
\pgfmoveto{\pgfxy(10.00,10.00)}\pgflineto{\pgfxy(15.00,15.00)}\pgfstroke
\pgfmoveto{\pgfxy(10.00,20.00)}\pgflineto{\pgfxy(10.00,15.00)}\pgfstroke
\pgfputat{\pgfxy(8.00,14.00)}{\pgfbox[bottom,left]{\fontsize{7.97}{9.56}\selectfont \makebox[0pt][r]{2}}}
\pgfputat{\pgfxy(8.00,9.00)}{\pgfbox[bottom,left]{\fontsize{7.97}{9.56}\selectfont \makebox[0pt][r]{1}}}
\pgfputat{\pgfxy(8.00,19.00)}{\pgfbox[bottom,left]{\fontsize{7.97}{9.56}\selectfont \makebox[0pt][r]{3}}}
\pgfputat{\pgfxy(17.00,14.00)}{\pgfbox[bottom,left]{\fontsize{7.97}{9.56}\selectfont 4}}
\end{pgfpicture}%
$ &
$
\centering
\begin{pgfpicture}{2.20mm}{3.70mm}{15.30mm}{17.50mm}
\pgfsetxvec{\pgfpoint{0.70mm}{0mm}}
\pgfsetyvec{\pgfpoint{0mm}{0.70mm}}
\color[rgb]{0,0,0}\pgfsetlinewidth{0.30mm}\pgfsetdash{}{0mm}
\pgfcircle[fill]{\pgfxy(10.00,10.00)}{0.70mm}
\pgfcircle[stroke]{\pgfxy(10.00,10.00)}{0.70mm}
\pgfcircle[fill]{\pgfxy(10.00,15.00)}{0.70mm}
\pgfcircle[stroke]{\pgfxy(10.00,15.00)}{0.70mm}
\pgfcircle[fill]{\pgfxy(10.00,20.00)}{0.70mm}
\pgfcircle[stroke]{\pgfxy(10.00,20.00)}{0.70mm}
\pgfcircle[fill]{\pgfxy(15.00,20.00)}{0.70mm}
\pgfcircle[stroke]{\pgfxy(15.00,20.00)}{0.70mm}
\pgfmoveto{\pgfxy(10.00,15.00)}\pgflineto{\pgfxy(10.00,10.00)}\pgfstroke
\pgfmoveto{\pgfxy(10.00,15.00)}\pgflineto{\pgfxy(15.00,20.00)}\pgfstroke
\pgfmoveto{\pgfxy(10.00,20.00)}\pgflineto{\pgfxy(10.00,15.00)}\pgfstroke
\pgfputat{\pgfxy(8.00,14.00)}{\pgfbox[bottom,left]{\fontsize{7.97}{9.56}\selectfont \makebox[0pt][r]{2}}}
\pgfputat{\pgfxy(8.00,9.00)}{\pgfbox[bottom,left]{\fontsize{7.97}{9.56}\selectfont \makebox[0pt][r]{1}}}
\pgfputat{\pgfxy(8.00,19.00)}{\pgfbox[bottom,left]{\fontsize{7.97}{9.56}\selectfont \makebox[0pt][r]{3}}}
\pgfputat{\pgfxy(17.00,19.00)}{\pgfbox[bottom,left]{\fontsize{7.97}{9.56}\selectfont 4}}
\end{pgfpicture}%
$ &
$
\centering
\begin{pgfpicture}{16.20mm}{3.70mm}{29.30mm}{17.50mm}
\pgfsetxvec{\pgfpoint{0.70mm}{0mm}}
\pgfsetyvec{\pgfpoint{0mm}{0.70mm}}
\color[rgb]{0,0,0}\pgfsetlinewidth{0.30mm}\pgfsetdash{}{0mm}
\pgfcircle[fill]{\pgfxy(30.00,10.00)}{0.70mm}
\pgfcircle[stroke]{\pgfxy(30.00,10.00)}{0.70mm}
\pgfcircle[fill]{\pgfxy(30.00,15.00)}{0.70mm}
\pgfcircle[stroke]{\pgfxy(30.00,15.00)}{0.70mm}
\pgfcircle[fill]{\pgfxy(30.00,20.00)}{0.70mm}
\pgfcircle[stroke]{\pgfxy(30.00,20.00)}{0.70mm}
\pgfcircle[fill]{\pgfxy(35.00,15.00)}{0.70mm}
\pgfcircle[stroke]{\pgfxy(35.00,15.00)}{0.70mm}
\pgfmoveto{\pgfxy(30.00,15.00)}\pgflineto{\pgfxy(30.00,10.00)}\pgfstroke
\pgfmoveto{\pgfxy(30.00,10.00)}\pgflineto{\pgfxy(35.00,15.00)}\pgfstroke
\pgfmoveto{\pgfxy(30.00,15.00)}\pgflineto{\pgfxy(30.00,20.00)}\pgfstroke
\pgfputat{\pgfxy(28.00,14.00)}{\pgfbox[bottom,left]{\fontsize{7.97}{9.56}\selectfont \makebox[0pt][r]{2}}}
\pgfputat{\pgfxy(28.00,9.00)}{\pgfbox[bottom,left]{\fontsize{7.97}{9.56}\selectfont \makebox[0pt][r]{1}}}
\pgfputat{\pgfxy(28.00,19.00)}{\pgfbox[bottom,left]{\fontsize{7.97}{9.56}\selectfont \makebox[0pt][r]{4}}}
\pgfputat{\pgfxy(37.00,14.00)}{\pgfbox[bottom,left]{\fontsize{7.97}{9.56}\selectfont 3}}
\end{pgfpicture}%
$ &
$
\centering
\begin{pgfpicture}{16.20mm}{3.70mm}{23.70mm}{20.65mm}
\pgfsetxvec{\pgfpoint{0.70mm}{0mm}}
\pgfsetyvec{\pgfpoint{0mm}{0.70mm}}
\color[rgb]{0,0,0}\pgfsetlinewidth{0.30mm}\pgfsetdash{}{0mm}
\pgfcircle[fill]{\pgfxy(30.00,10.00)}{0.70mm}
\pgfcircle[stroke]{\pgfxy(30.00,10.00)}{0.70mm}
\pgfcircle[fill]{\pgfxy(30.00,15.00)}{0.70mm}
\pgfcircle[stroke]{\pgfxy(30.00,15.00)}{0.70mm}
\pgfcircle[fill]{\pgfxy(30.00,20.00)}{0.70mm}
\pgfcircle[stroke]{\pgfxy(30.00,20.00)}{0.70mm}
\pgfcircle[fill]{\pgfxy(30.00,25.00)}{0.70mm}
\pgfcircle[stroke]{\pgfxy(30.00,25.00)}{0.70mm}
\pgfmoveto{\pgfxy(30.00,15.00)}\pgflineto{\pgfxy(30.00,10.00)}\pgfstroke
\pgfputat{\pgfxy(28.00,14.00)}{\pgfbox[bottom,left]{\fontsize{7.97}{9.56}\selectfont \makebox[0pt][r]{2}}}
\pgfputat{\pgfxy(28.00,9.00)}{\pgfbox[bottom,left]{\fontsize{7.97}{9.56}\selectfont \makebox[0pt][r]{1}}}
\pgfputat{\pgfxy(28.00,18.50)}{\pgfbox[bottom,left]{\fontsize{7.97}{9.56}\selectfont \makebox[0pt][r]{3}}}
\pgfputat{\pgfxy(28.00,23.50)}{\pgfbox[bottom,left]{\fontsize{7.97}{9.56}\selectfont \makebox[0pt][r]{4}}}
\pgfmoveto{\pgfxy(30.00,20.00)}\pgflineto{\pgfxy(30.00,15.00)}\pgfstroke
\pgfmoveto{\pgfxy(30.00,25.00)}\pgflineto{\pgfxy(30.00,20.00)}\pgfstroke
\end{pgfpicture}%
$
\\
\hline
$p(\varphi(\pi))$ & 2 & 3 & 3 & 4 & 4
\\
\end{tabular}
$$
\end{ex}


\providecommand{\bysame}{\leavevmode\hbox to3em{\hrulefill}\thinspace}
\providecommand{\href}[2]{#2}

\end{document}